\newtheorem{thm}{Theorem}[section]
\newtheorem{prop}[thm]{Proposition}
\newtheorem{lemma}[thm]{Lemma}
\newtheorem{cor}[thm]{Corollary}
\newtheorem{conj}[thm]{Conjecture}
\newtheorem{mainthm}{Theorem}
\newtheorem{mainprob}{Problem}
\theoremstyle{definition}
\newtheorem{rem}[thm]{Remark}
\newtheorem{defini}[thm]{Definition}
\newtheorem{exe}[thm]{Example}
\newcommand\R{\mathbb{R}}
\newcommand\RR{\mathbb{R}}
\newcommand\Q{\mathbb{Q}}
\newcommand\QQ{\mathbb{Q}}
\newcommand\CC{\mathbb{C}}
\newcommand\N{\mathbb{N}}
\newcommand\Z{\mathbb{Z}}
\newcommand\ZZ{\mathbb{Z}}
\newcommand{\di}{\text{d}}
\newcommand{\sph}{\mathbb{S}}
\newcommand{\fol}{\mathcal{F}}
\newcommand{\wall}{\Sigma}
\newcommand{\B}{\mathcal{B}}
\newcommand{\M}{\mathcal{M}}
\DeclareMathOperator{\ind}{ind}
\title{Rational Hyperbolicity Problem}
\author[R. A. E. Mendes]{Ricardo A. E. Mendes}
\address{University of Oklahoma\newline
\indent Department of Mathematics\newline
\indent 601 Elm Ave\newline
\indent Norman, OK, 73019-3103, USA}
\email{ricardo.mendes@ou.edu}
\author[A. Minuzzo]{Alessandro Minuzzo}
\address{Universit\`a di Parma\newline
\indent Dipartimento di Scienze Matematiche, Fisiche e Informatiche.\newline
\indent Parco Area delle Scienze, 53/A, \newline
\indent 43124 Parma (PR), Italy}
\email{alessandro.minuzzo@unipr.it}
\author[M. Radeschi]{Marco Radeschi}
\address{Universit\`a degli Studi di Torino\newline
\indent Departimento di Matematica ``G. Peano''\newline
\indent Via Carlo Alberto, 10\newline
\indent 10123 Torino (TO), Italy}
\email{marco.radeschi@unito.it}
\thanks{R.~M. has been supported by DMS-2506409, DMS-2005373, and the Dodge Family College of Arts and Sciences Junior Faculty Summer Fellowship of the University of Oklahoma. A.~M. was partally supported by GNSAGA of the INdAM}
\begin{document}

\maketitle
\begin{abstract}

We prove that a compact simply connected manifold $M$ with a variationally complete $G$-action satisfying certain mild conditions (e.g. trivial principal isotropy, or simply connected principal orbits) is rationally elliptic if and only if $M/G$ is flat. This answers several conjectures and problems regarding the rational homotopy of manifolds with symmetries. On the other hand, without the extra conditions we find examples of rationally elliptic $G$-manifolds $M$ where $M/G$ admits a hyperbolic metric.
\end{abstract}

\section{Introduction}

A compact, simply connected manifold $M$ is called \emph{rationally elliptic} 
 if $\sum_{i>1}\dim(\pi_i(M)\otimes\QQ)<\infty$, and \emph{rationally hyperbolic} otherwise. This notion is known to impose severe topological restrictions to such an $M$ \cite{FH79}, and it is important in geometry via the famous \emph{Bott--Grove--Halperin conjecture}, stating that any compact simply connected manifold of (almost) nonnegative sectional curvature is rationally elliptic. In the past decades, several geometric criteria to detect rational ellipticity have been produced \cite{GH87, PP04, GZ12, GWY19, KR25}. Most of these operate under the assumption that $M$ is acted on by a compact group $G$ of isometries. For example, we have:

\begin{thm}
Let $M$ be a compact, simply connected manifold with an isometric action by a compact group $G$. Then:
\begin{enumerate}
\item \cite{GZ12}: If the $G$ action is polar with a flat or spherical section, then $M$ is rationally elliptic.
\item \cite{GWY19}: if $\dim M/G=2$ (in which case $M/G$ is a 2-dimensional orbifold by \cite{LT10}) and $M/G$ is an \emph{elliptic orbifold} then $M$ is rationally elliptic.
\end{enumerate}
\end{thm}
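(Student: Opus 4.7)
The plan is to treat the two parts separately; in both cases I would exploit the classical fact that compact homogeneous spaces $G/H$ are rationally elliptic, and propagate ellipticity from the orbit strata to all of $M$ via fibration spectral sequences together with Mayer--Vietoris gluings dictated by the singular orbit decomposition.

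For part (1), I would begin by recalling the slice theorem for polar actions: every singular orbit $G\cdot p$ admits a tubular neighborhood $G\times_{G_p} \nu_p$, and the slice representation of $G_p$ on $\nu_p$ is itself polar, with a section of smaller cohomogeneity. Picking $\Sigma$ to be a flat Euclidean space or a round sphere, the Weyl group $W$ is an affine or finite Coxeter group and the closure of a Weyl chamber $C\subset\Sigma$ is a simplex (possibly a product of simplices in the reducible affine case). I would then proceed by induction on $\dim C$: the codimension-one faces of $C$ correspond to singular orbits whose slice representations are polar on sections of dimension one lower, still flat or spherical. This realizes $M$ as a double mapping cylinder over polar $G$-manifolds of smaller cohomogeneity, and the spectral sequence of the associated fibration, combined with the inductive hypothesis, delivers rational ellipticity of $M$.

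For part (2), I would first invoke Lytchak--Thorbergsson \cite{LT10} to identify $M/G$ with a 2-dimensional Riemannian orbifold $\mathcal{O}$. The elliptic hypothesis restricts $\mathcal{O}$ to one of a short list: $\sph^2$, $\mathbb{RP}^2$, a teardrop $\sph^2(n)$, a football $\sph^2(p,q)$, a triangular orbifold $\sph^2(p,q,r)$ with $1/p+1/q+1/r>1$, and their disk-quotient counterparts with totally geodesic boundary. For each item on the list, the preimage of the regular part of $\mathcal{O}$ is a principal $G/H$-bundle over a surface with boundary, while the preimage of each cone point or boundary stratum is, by the slice theorem, a sphere bundle over a singular orbit. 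A case-by-case Mayer--Vietoris argument presents $M$ as an iterated union of rationally elliptic homogeneous and sphere-bundle pieces, whose Sullivan models compose to an elliptic model for $M$.

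The most delicate step in both parts is the gluing at the singular stratification. In general, pasting rationally elliptic pieces along a common subspace can easily produce a rationally hyperbolic total space (free-product growth in the loop space homology), and the point of the flat-or-spherical section in (1), and of the elliptic-orbifold hypothesis in (2), is precisely that the attaching maps factor through sphere bundles compatible with ellipticity. Concretely, the main obstacle will be verifying, at each stage of the induction or case analysis, that the long exact sequence of rational homotopy groups degenerates enough to keep $\pi_*(M)\otimes\QQ$ finite-dimensional; this is where the very restricted geometry of the chamber $\Sigma/W$, respectively of the 2-orbifold $\mathcal{O}$, enters decisively.
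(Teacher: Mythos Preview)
This theorem is not proved in the present paper: it is stated in the introduction as a summary of prior results, with proofs delegated to \cite{GZ12} and \cite{GWY19}. So there is no in-paper argument to compare against. That said, your proposal is worth evaluating on its own terms, and against what those references actually do.

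Your strategy --- decompose $M$ over the orbit stratification, observe that each piece is a homogeneous space or a sphere bundle over one, and then glue via Mayer--Vietoris or Sullivan models --- is not the route taken in either reference. Both \cite{GZ12} and \cite{GWY19} work instead through Morse theory on the path space $\Omega_{p,L}(M)$, bounding the number of critical geodesics of each index in terms of the growth of the (orbifold) fundamental group of the section or quotient; flatness/sphericity, respectively the elliptic-orbifold condition, forces this growth to be polynomial, which via the Rational Dichotomy yields ellipticity. This is the same circle of ideas the present paper develops in Sections~\ref{S:growth and hyperbolicity}--\ref{S:perfectness}.

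Your approach, by contrast, has a genuine gap that you yourself flag but do not close. You write that ``pasting rationally elliptic pieces along a common subspace can easily produce a rationally hyperbolic total space'' and then assert that the geometry of the chamber or orbifold prevents this --- but you never say \emph{how}. Neither the Mayer--Vietoris long exact sequence nor the homotopy-fibration long exact sequence gives direct control on $\dim(\pi_*\otimes\QQ)$ of a union, and there is no general principle by which Sullivan models of elliptic pieces ``compose to an elliptic model'' under pushout. Indeed, the counterexamples in Section~\ref{S:counterexs} of this very paper are built by equivariant connected sum of rational spheres along fixed points, each piece elliptic, with quotient a hyperbolic polygon --- and while those particular examples happen to remain elliptic, the companion examples $N_\ell=SU(3)^{\#\ell}$ in Theorem~\ref{T:neg-ans} are assembled the same way and are rationally \emph{hyperbolic}. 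So the gluing step cannot be dispatched by general nonsense; one needs a mechanism (like the path-space Morse theory) that sees the global combinatorics of the Weyl group or orbifold group, not just the local pieces.
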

We recall that an isometric $G$-action is \emph{polar} if it admits  a \emph{section}, i.e. a submanifold $S$ with $\dim S=\dim M/G$ which meets all orbits perpendicularly. Recall furthermore (see, for example, \cite[Theorem 5.1.5]{Cho12})
that a compact 2-dimensional orbifold is either bad (i.e. compact simply connected but not a manifold), or covered by a round sphere, by a Euclidean space or by a hyperbolic space. In all but the last case, we say that the orbifold is elliptic.

The theorems above were believed to be sharp, in the following sense:
\begin{conj}{\cite[p. 309]{GZ12}}\label{Conj:mainconj1}
Let $M$ be a compact, simply connected manifold with a polar action by a compact group $G$. If the section is hyperbolic, then $M$ is rationally hyperbolic.
\end{conj}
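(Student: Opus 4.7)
The natural plan is to exploit the polar structure to identify $M/G$ with $S/W$, where $W = N_G(S)/Z_G(S)$ is the (necessarily finite, since $M$ and hence $S$ is compact) Weyl group. The hypothesis that the section $S$ is hyperbolic then means that $M/G$ is a closed hyperbolic orbifold---a highly rigid object, aspherical with word-hyperbolic orbifold fundamental group---so the problem reduces to propagating this rigidity of the orbit space to the total space $M$.

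To make this propagation precise, I would work through equivariant cohomology. For polar actions, the Hsiang--Palais--Terng Chevalley restriction theorem gives $H^*_G(M;\QQ) \cong H^*(S;\QQ)^W$, and polar actions are known to be equivariantly formal, so the Borel fibration $M \hookrightarrow M_G \to BG$ collapses at $E_2$ and recovers $H^*(M;\QQ)$ from the $W$-invariant cohomology of the section. The key step is then to show that the rich $W$-invariant cohomology of a closed hyperbolic manifold forces $H^*(M;\QQ)$---or, more intrinsically, $\pi_*(M) \otimes \QQ$---to be infinite-dimensional, by invoking the F\'elix--Halperin--Thomas dichotomy or by building an infinite sequence of non-trivial elements in the Sullivan minimal model of $M$ from the aspherical structure of $S/W$. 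A slicker variant, if available, would be to construct a rationally non-trivial map from a classifying space of the orbifold fundamental group $\pi_1^{orb}(M/G)$ into a suitable Borel construction on $M$, and use the fact that a non-elementary word-hyperbolic group cannot classify a rationally elliptic target.

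The main obstacle---and the reason this has remained a conjecture---is exactly this last step: rational ellipticity of the total space of a fibration is not controlled by the base in the presence of non-trivial fibers. Non-principal orbits and isotropy representations of $G$ contribute to the rational homotopy of $M$ in a way that can, in principle, cancel the infinite-dimensional contributions coming from $M/G$. Without additional constraints (such as trivial principal isotropy or simply connected principal orbits, as highlighted in the abstract), one cannot rule out such cancellations---and the abstract indeed announces rationally elliptic examples with hyperbolic orbit space. The cleanest route to salvage the approach would be to impose one of these extra hypotheses and then produce a rational model for $M$ as a twisted product over $M/G$ with fibers determined by the isotropy, after which the problem would reduce to showing that an aspherical base with word-hyperbolic orbifold fundamental group forces infinite rational homotopy in the total space.
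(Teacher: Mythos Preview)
The statement you are trying to prove is a \emph{conjecture} that the paper actually \emph{disproves}: Theorem~\ref{T:counterex} exhibits rationally elliptic $5$- and $8$-manifolds with polar actions whose section (and quotient) is a hyperbolic polygon. So no proof of Conjecture~\ref{Conj:mainconj1} can be correct, and your outline must contain an irreparable gap.

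The gap is already visible in your equivariant-cohomology strategy, independently of the counterexamples. The Chevalley restriction isomorphism $H^*_G(M;\QQ)\cong H^*(S;\QQ)^W$ produces a \emph{finite-dimensional} graded algebra, since $S$ is a compact manifold; and $H^*(M;\QQ)$ itself is finite-dimensional for the same reason. So the sentence ``forces $H^*(M;\QQ)$\ldots to be infinite-dimensional'' cannot be right, and the Borel spectral sequence will never see the distinction between rational ellipticity and hyperbolicity, which lives in the growth of $\dim\pi_*(M)\otimes\QQ$ or, equivalently, of $\sum_{i\le k} b_i(\Omega M)$. Concretely, the $5$-dimensional counterexamples $W^{\#\ell}$ are rational spheres: their rational cohomology is as small as possible, yet their quotients are hyperbolic polygons. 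Nothing in $H^*(S;\QQ)^W$ distinguishes them from the rationally hyperbolic $SU(3)^{\#\ell}$ with isometric quotient (Theorem~\ref{T:neg-ans}). Also, polar actions are not equivariantly formal in general, so that step fails too.

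For the \emph{conditional} result (Theorem~\ref{T:abelian}, under property $(P)$), the paper's route is entirely different from yours and does not go through $H^*(M)$ at all. It works on the path space $\Omega_{p,L}(M)$: critical points of the energy functional are in bijection with the Weyl group $W=\pi_1^{orb}(M/G)$, their Morse indices grow linearly with word length in $W$, and property $(P)$ guarantees that the Bott--Samelson cycles are \emph{orientable}, hence completing cycles, so $E$ is a perfect Morse function. Then exponential growth of $W$ (which holds when the section is hyperbolic) forces exponential growth of $\sum_{i\le k} b_i(\Omega_{p,L}(M))$, and a Serre spectral sequence argument transfers this to $\Omega(M)$. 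The role of the extra hypothesis is precisely orientability of the Bott--Samelson cycles; in the counterexamples the principal isotropy has even $|\pi_0(H)|$ and these cycles fail to be orientable, so perfectness---and with it the whole argument---breaks down.
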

\begin{conj}{\cite[Problem 2.6]{GWY19}}\label{Conj:mainconj2}
Let $M$ be a compact, simply connected manifold with an isometric $G$-action, such that $\dim M/G=2$ and $M/G$ is covered by the hyperbolic plane. Then $M$ is rationally hyperbolic.
\end{conj}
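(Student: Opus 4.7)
The natural plan is to leverage the orbit map $\pi\colon M \to X:=M/G$ as a fibration-like structure in rational homotopy, and invoke the F\'elix--Halperin--Thomas ellipticity principle: in a Serre fibration $F\to E\to B$ of nice spaces with $F$ rationally elliptic, ellipticity of $E$ is equivalent to ellipticity of $B$. Since the principal-orbit fiber $F=G/H$ is a homogeneous space and hence rationally elliptic, showing that $M$ is rationally hyperbolic reduces to showing that the orbifold $X$ is rationally hyperbolic in an appropriate orbifold sense.

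To execute this, one first promotes $\pi$ to a genuine rational fibration of orbifolds. By the slice theorem, every point of $M$ has an equivariant neighborhood $G\times_K V_K$, where $V_K$ is the slice representation; at the level of orbit spaces this gives an orbifold chart $V_K / K$ of the $2$-dimensional orbifold $X$, and a Koszul--Sullivan twisted-tensor-product computation then packages $\pi$ into a rational fibration with fiber $G/H$ and base the orbifold Sullivan model of $X$. One then shows that a hyperbolic $2$-orbifold is rationally hyperbolic: since $X$ is covered by $\mathbb H^2$, it is orbifold-aspherical with $\Gamma=\pi_1^{\mathrm{orb}}(X)$ a cocompact Fuchsian group, virtually a surface group of genus $g\geq 2$. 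The rational Malcev completion of such a group has unbounded dimension, which via the Chen--Sullivan correspondence translates into an infinite-dimensional minimal Sullivan model for $X$; combined with the fibration, this forces the minimal Sullivan model of $M$ to be infinite-dimensional as well.

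The main obstacle is the first step: the orbit map is not literally an orbifold fibration in the presence of singular orbits with higher isotropy, and upgrading a stratified quasi-fibration to a Sullivan-level fibration requires auxiliary assumptions on the isotropy structure. Indeed, as the abstract foreshadows, in full generality the connected part of singular isotropy contributes rational Euler classes that can cancel the obstruction coming from $\Gamma$, producing rationally elliptic $M$ with hyperbolic $M/G$. So I expect the argument above to close cleanly under hypotheses such as trivial principal isotropy or simply-connected principal orbits, and to fail without them; identifying the sharp condition under which the cancellation can be ruled out is the technical heart of the result.
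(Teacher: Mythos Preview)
The statement you are addressing is a \emph{conjecture}, and the paper actually \emph{disproves} it in general (Theorem~\ref{T:counterex} produces rationally elliptic $SO(3)$- and $SU(2)\times SU(2)$-manifolds with hyperbolic polygonal quotients). You correctly anticipate this in your final paragraph, so the real content of your proposal is an outline for proving the conjecture under an extra hypothesis like property~$(P)$, to be compared with the paper's Theorem~\ref{T:abelian}.

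Here the approaches diverge completely, and yours has a genuine gap. The paper does \emph{not} pass through any Sullivan-model fibration over the orbit space. Instead it works on the path space $\Omega_{p,L}(M)$: the energy functional is Morse, its critical points are horizontal geodesics, and these are in bijection with the Coxeter group $W=\pi_1^{orb}(M/G)$. The index of each critical geodesic is controlled linearly by word length in $W$ (via Lytchak's focal-point count and \v{S}varc--Milnor), so if $E$ is \emph{perfect} then exponential growth of $W$ forces exponential growth of $\sum b_i(\Omega M)$, hence rational hyperbolicity. Perfectness is established by showing the Bott--Samelson cycles $\Delta^\gamma$ are $\mathbb{Z}$-orientable, and this is exactly where property~$(P)$ enters: it is a mod-$2$ condition ensuring that certain first Stiefel--Whitney classes pull back to zero (Proposition~\ref{P:conditions-delta-oriantable}).

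Your proposed route, by contrast, hinges on promoting $\pi:M\to M/G$ to a rational fibration with fiber $G/H$. This is not a recognized construction when singular orbits are present, and your sketch does not indicate how to build it; the slice theorem gives local product charts $G\times_K V_K$, but the ``fiber'' jumps at singular strata and there is no global twisted tensor product. More seriously, property~$(P)$ is a $\mathbb{Z}_2$-orientability condition on iterated sphere bundles --- it has no evident interpretation as a hypothesis that would make an orbifold Sullivan fibration exist or behave well. So even granting your heuristic that ``something cancels'' in the bad examples, your framework gives no mechanism connecting $(P)$ to that cancellation, whereas the paper's Morse-theoretic argument makes the role of $(P)$ completely transparent.
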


The actions in Conjecture \ref{Conj:mainconj1} and  \ref{Conj:mainconj2} are special cases of so-called \emph{variationally complete actions} (see Section \ref{S:variationally-complete}), which were proved by Lytchak and Thorbergsson to be equivalent to isometric actions whose quotient space is a good orbifold without conjugate points \cite{LT10}.

%
%
%

The main goal of this paper is to show that, while in complete generality the conjectures are false, they become true under certain conditions of the action. For this, we say that an isometric action of a Lie group $G$ \emph{satisfies condition (P)} if any of the following conditions holds:

\[
(P)=\left\{\begin{array}{l}
\textrm{The principal isotropy group $H$ has odd }|\pi_0(H)|.\\
\textrm{A principal orbit $L$ has }H^1(L;\ZZ_2)=0.
\end{array}
\right.
\]

Property $(P)$ is for example satisfied for actions with trivial principal isotropy group, e.g. actions of abelian groups.

It is possible to prove using the Leray spectral sequence for the map $M\to M/G$ (although we will not do it here)
that property $(P)$ is satisfied for actions where every singular orbit $L'$ has $\dim L'\leq\dim L-2$, where $L$ is a principal orbit. In fact, in \cite[page 309]{GZ12}
it was essentially observed that Conjecture \ref{Conj:mainconj1} is valid under this condition, by the Lacunary Principle.

%

\begin{mainthm} \label{T:abelian}
Let $M$ be a compact, simply connected manifold with a variationally complete $G$-action satisfying property $(P)$. Then $M$ is rationally elliptic if and only if $M/G$ is flat.
\end{mainthm}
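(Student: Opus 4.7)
Write $B = M/G$, equipped with its induced good Riemannian orbifold structure. By \cite{LT10}, $B$ has no conjugate points, so the orbifold Cartan--Hadamard theorem identifies its orbifold universal cover $\widetilde B$ with $\R^n$; in particular $B$ is flat if and only if $\widetilde B$ is Euclidean space, in which case $\pi_1^{\mathrm{orb}}(B)$ is a crystallographic group. Let $H$ denote a principal isotropy and $L = G/H$ the corresponding principal orbit. The orbit map $\pi \colon M \to B$ restricts to a genuine fiber bundle with fiber $L$ over the regular stratum $B^{\mathrm{reg}}$, and the strategy throughout is to propagate topological information between $M$ and $B$ using the rational Leray spectral sequence of $\pi$. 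Property $(P)$ is crucial here: its two clauses are designed so that the orbit-bundle monodromy on $H^*(L;\QQ)$ is trivial, whence the Leray spectral sequence behaves rationally as for a genuine fibration $L \to M \to B$.

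\textbf{If direction.} Assume $B$ is flat. Bieberbach's theorem applied to $\pi_1^{\mathrm{orb}}(B)$ produces a finite orbifold cover $\hat B \to B$ with $\hat B$ a flat torus $T^n$; pull the $G$-action back to a finite cover $\hat M \to M$ with orbit space $\hat B$. Using the $(P)$-induced rational Leray collapse one deduces an additive identification $H^*(\hat M;\QQ) \cong H^*(T^n;\QQ) \otimes H^*(L;\QQ)$. Both factors are rationally elliptic---$T^n$ trivially and $L = G/H$ as a homogeneous space of compact Lie groups---so a standard spectral-sequence or minimal-model comparison shows $\hat M$, and hence $M$, is rationally elliptic.

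\textbf{Only if direction.} Assume $B$ is not flat. Then $\widetilde B$ is a simply connected manifold without conjugate points that admits at least one point of strictly negative sectional curvature. I would argue by contradiction: if $M$ were rationally elliptic, property $(P)$ and the Leray collapse would translate the bounded total rational Betti number and the non-negativity of $\chi(M)$ into parallel bounds on rational invariants of $B$, forcing $B$ to behave as a rationally elliptic orbifold (bounded orbifold Betti numbers along all finite covers, non-negative orbifold Euler characteristic). This contradicts geometric consequences of non-flatness, via Gauss--Bonnet in dimension two, or volume-growth and Bishop--Gromov-type estimates on $\widetilde B$ in higher dimensions.

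\textbf{Main obstacle.} The delicate part is the only-if direction: converting a purely geometric defect on $B$ into a rational-homotopical obstruction on $M$. Property $(P)$ is indispensable here; as the examples promised in the abstract show, without $(P)$ the codimension-one singular strata of $B$ can absorb the rational complexity imposed by hyperbolic geometry, so that $M$ remains rationally elliptic over a non-flat base. Consequently the bulk of the technical work lies in rigorously establishing the Leray-rational-fibration heuristic under $(P)$, and in pinning down the precise orbifold-rational invariants of $B$ that rational ellipticity of $M$ forces to be ``elliptic-like''.
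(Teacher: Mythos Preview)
Your approach is not the paper's and has several genuine gaps. The paper does \emph{not} use the Leray spectral sequence of $\pi:M\to M/G$ at all; that map is not a fibration (the singular strata are exactly the point of the problem), and property $(P)$ is not a rational-monodromy condition. Its two clauses are $\ZZ_2$-conditions (odd $|\pi_0(H)|$, vanishing $H^1(L;\ZZ_2)$) whose role is to force the first Stiefel--Whitney class of each sphere bundle $\phi_{L,L'}:L\to L'$ (closest-point projection from a principal leaf to a codimension-one singular leaf) to pull back to zero on $L$. This makes the iterated-sphere-bundle Bott--Samelson cycles $\Delta^\gamma\subset W^{1,2}_{p,L}(M)$ orientable, hence completing cycles, hence the energy functional $E$ on the path space $W^{1,2}_{p,L}(M)$ is a \emph{perfect} Morse function. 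That is the entire content of $(P)$.

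With $E$ perfect, $\sum_{i\le k}b_i(\Omega_{p,L}(M))$ equals the number of horizontal geodesics from $p$ to $L$ of index $\le k$, which the paper shows is comparable to the number of elements of word-length $\le Ck$ in the Coxeter group $W=\pi_1^{\mathrm{orb}}(M/G)$. Thus $M$ is rationally elliptic iff $W$ has polynomial growth. The flat $\Rightarrow$ elliptic direction is then immediate (crystallographic $W$). For the converse, sub-exponential growth of the Coxeter group $W$ forces (by a result of Davis) a finite-index abelian subgroup $\ZZ^n\subset W$ acting freely and cocompactly on $N=\widetilde{M/G}$; the quotient is a manifold without conjugate points homotopy equivalent to $T^n$, and Burago--Ivanov then gives flatness. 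Your ``only if'' sketch via Gauss--Bonnet or Bishop--Gromov does not reach this; no curvature sign is available on $N$, only the absence of conjugate points. Finally, note that $M$ is simply connected, so ``pulling the action back to a finite cover $\hat M\to M$'' is vacuous; and even on the regular part, an additive isomorphism $H^*(\hat M;\QQ)\cong H^*(T^n;\QQ)\otimes H^*(L;\QQ)$ would not by itself yield rational ellipticity.
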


Theorem \ref{T:abelian}
uses the following flatness result for manifolds without conjugate points, which might be of independent interest:

\begin{lemma}[Lemma \ref{L:flat}]
Let $N$ be a complete simply connected manifold without conjugate points. Suppose that $N$ has sub-exponential volume growth, and its isometry group contains a discrete, finitely generated co-compact subgroup $\Gamma$ which is either abelian or generated by reflections. Then $N$ is flat.
\end{lemma}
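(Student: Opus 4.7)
The plan is to show that $\Gamma$ is virtually $\Z^n$ acting freely and co-compactly on $N$, and then to invoke a Hopf-type rigidity theorem for the torus. First, since $N$ is complete, simply connected, and without conjugate points, the exponential map at any point is a diffeomorphism, so $N$ is diffeomorphic to $\R^n$. By the Milnor--\v{S}varc lemma, the finitely generated co-compact group $\Gamma$ is quasi-isometric to $N$, and therefore inherits its sub-exponential growth. Moreover, if $g \in \Gamma$ fixes a point $p$, then the identity $g = \exp_p \circ dg_p \circ \exp_p^{-1}$ expresses $g$ through its linear part $dg_p \in O(T_pN)$; were $g$ of infinite order, compactness of the orthogonal group would produce a convergent subsequence of its iterates, contradicting the discreteness of $\Gamma$. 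Thus every torsion-free subgroup of $\Gamma$ acts freely on $N$.

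Next I would produce a torsion-free, finite-index subgroup $\Gamma_0 \subseteq \Gamma$ isomorphic to $\Z^n$. In the abelian case, the structure theorem for finitely generated abelian groups gives $\Gamma \cong \Z^k \oplus F$ with $F$ finite, so $\Gamma_0 := \Z^k$ works. In the reflection case, the diffeomorphism $N \cong \R^n$ supports a chamber decomposition that identifies $\Gamma$ with a Coxeter group; I then invoke the growth dichotomy for Coxeter groups (growth is either polynomial or exponential, with polynomial growth characterising the affine type) together with the sub-exponential growth of $\Gamma$ to conclude that $\Gamma$ is of affine type, hence virtually $\Z^k$. In either case $\Gamma_0$ acts freely and co-compactly on $N \cong \R^n$, so $N/\Gamma_0$ is a closed aspherical $n$-manifold with fundamental group $\Z^k$; comparing its dimension with the cohomological dimension of $\Z^k$ forces $k = n$.

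To finish, $N/\Gamma_0$ is a closed aspherical $n$-manifold with $\pi_1 = \Z^n$ whose Riemannian metric has no conjugate points. Topological rigidity of tori (classification of surfaces, geometrisation, and Farrell--Hsiang in higher dimensions) identifies this quotient with $T^n$, and the Burago--Ivanov theorem (the higher-dimensional Hopf conjecture) then asserts that any such metric on $T^n$ is flat, so $N$ itself is flat by lifting. The principal obstacle I anticipate is the reflection case: one must verify that a discrete group generated by reflections in a simply connected manifold without conjugate points (rather than in a space of constant curvature) really is a Coxeter group, and then correctly invoke the Coxeter growth dichotomy. A secondary technicality is the identification of $N/\Gamma_0$ with the standard torus in a form strong enough for Burago--Ivanov, although in practice the theorem should apply already at the level of the $\Z^n$-cover.
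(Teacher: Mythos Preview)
Your proof is correct and follows the same overall strategy as the paper. Your concern about the reflection case is well placed: the paper resolves it by citing \cite{AKLM07} (recorded as Theorem~\ref{T:Reflections}(6)) for the Coxeter structure on $\Gamma$, and then \cite[Proposition~17.2.1]{Dav25} for the fact that a Coxeter group of sub-exponential growth is virtually Euclidean, hence virtually $\Z^n$, exactly as you outline.

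The one place the paper differs is in identifying $N/\Gamma_0$ with a torus. Rather than invoking Farrell--Hsiang and low-dimensional topology case by case (which, as you wrote it, leaves $n=4$ unaddressed and in high dimensions gives only a homeomorphism), the paper cites Wall \cite{Wal99} to obtain, for $n\geq 5$, a finite cover \emph{diffeomorphic} to $T^n$, and for $n<5$ uses a product trick: replace $N$ by $N\times\R^{5-n}$ with the co-compact $\Gamma_0\times\Z^{5-n}$-action, apply the $n\geq 5$ case, and deduce flatness of $N$ as a totally geodesic factor. This handles all dimensions uniformly and delivers the diffeomorphism needed to invoke Burago--Ivanov as literally stated.
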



In particular, Theorem \ref{T:abelian} proves Conjecture \ref{Conj:mainconj1} and  \ref{Conj:mainconj2} when $G$ satisfies property $(P)$, since in both situations $M/G=N/\Gamma$ is hyperbolic.

On the other hand, if no extra assumption is added to Conjectures \ref{Conj:mainconj1} and \ref{Conj:mainconj2}, we produce
counterexamples:

\begin{mainthm} \label{T:counterex}(Negative answer to Conjectures \ref{Conj:mainconj1} and \ref{Conj:mainconj2})
    For any $\ell\geq 4$ (resp. $\ell\geq 3$), there exist rationally elliptic $5$-manifolds (resp. 8-manifolds), with a polar $SO(3)$-action (resp. $SU(2)\times SU(2)$-action) whose quotient is isometric to a hyperbolic polygon with $\ell$ edges.
    \end{mainthm}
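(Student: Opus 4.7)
The plan is to construct the examples explicitly via a polar/reflection construction over the hyperbolic polygon, then verify polarity, simple connectivity, and rational ellipticity in turn. By Theorem~A, any such $M$ must fail property~$(P)$: the principal isotropy $H$ must have $|\pi_0(H)|$ even \emph{and} the principal orbit $L=G/H$ must satisfy $H^1(L;\mathbb{Z}_2)\neq 0$. A natural minimal choice is $H=\mathbb{Z}_2$ with $L=SO(3)/\mathbb{Z}_2$ (a lens space with $\pi_1(L)=\mathbb{Z}_4$) in the $5$-dimensional case, and a $\mathbb{Z}_2$ in the center of $SU(2)\times SU(2)$ producing an $L$ with $\pi_1(L)\neq 0$ in the $8$-dimensional case; in both settings the dimension count is correct and $(P)$ fails.

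For the construction, fix a hyperbolic $\ell$-gon $P\subset\mathbb{H}^2$ with interior angles $\pi/m_{i,i+1}$ satisfying the Gauss--Bonnet inequality $\sum \pi/m_{i,i+1}<(\ell-2)\pi$; this yields the thresholds $\ell\geq 4$ and $\ell\geq 3$, the latter being weaker because of the larger isotropy flexibility of $SU(2)\times SU(2)$. Let $W$ denote the reflection group of $P$. To each edge $e_i$ assign a subgroup $K_i\subset G$ with $H\subset K_i$ and $K_i/H\cong\mathbb{Z}_2$, compatibly so that at each vertex $v_i=e_i\cap e_{i+1}$ the pair $(K_i,K_{i+1})$ generates a subgroup realizing the Coxeter relation of order $m_{i,i+1}$. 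Define
\[ M := (G\times P)/\sim, \]
where $(g,x)\sim(gk,x)$ whenever $x$ lies in the stratum of $P$ with prescribed isotropy $K$ and $k\in K$. By construction $G$ acts on $M$ by left multiplication with $M/G=P$, and the action is polar with section the $W$-cover $\mathbb{H}^2\to P$.

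Verifying simple connectivity then proceeds by a Seifert--van~Kampen argument along the stratification: the open stratum contributes $G/H\times\mathrm{int}(P)$, and successively attaching the preimages of the edges and vertices introduces relations that kill the classes in $\pi_1(G/H)$ coming from the $K_i/H$; one chooses the $K_i$ so that these classes collectively normally generate $\pi_1(G/H)$, forcing $\pi_1(M)=1$.

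The main obstacle, and the whole point of the construction, is to show that $M$ is rationally elliptic despite the hyperbolicity of the quotient. I would compute a Sullivan minimal model of $M$ (equivalently $H^*(M;\mathbb{Q})$) by a Mayer--Vietoris over the stratification by orbit type, and identify $M$ up to rational equivalence with a standard rationally elliptic simply connected 5-manifold such as $S^3\times S^2$, $S^5$, or the Wu manifold $SU(3)/SO(3)$ (with an analogous rationally elliptic 8-manifold in the second case). The mechanism enabling ellipticity is exactly the failure of~$(P)$: the non-trivial $\mathbb{Z}_2$-class in $H^1(L;\mathbb{Z}_2)$ produces a differential cancellation in the Leray--Serre spectral sequence for $M\to M/G$ that is unavailable under~$(P)$, so that the exponentially growing rational homotopy of $P$ (inherited from the hyperbolic Coxeter group $W$) collapses on the total space -- this is precisely the cancellation that the Lacunary-Principle argument of Grove--Ziller (alluded to before Conjecture~\ref{Conj:mainconj1}) exploits under property~$(P)$.
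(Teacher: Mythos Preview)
Your approach diverges from the paper's and, as written, contains a structural error together with a decisive gap.

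\textbf{The error.} You prescribe edge isotropies $K_i$ with $H\subset K_i$ and $K_i/H\cong\mathbb{Z}_2$, with $H$ finite. Then $\dim K_i=0$ and the edge orbits $G/K_i$ have the \emph{same} dimension as the principal orbits, i.e.\ they are exceptional rather than singular. This is incompatible with the setup: by Proposition~\ref{P:coxeter} a variationally complete foliation on a simply connected manifold has no exceptional leaves, and more concretely the local slice over an edge point is a $2$-disk on which $K_i$ must act with one-dimensional quotient $[0,\epsilon)$, forcing $K_i$ to contain a circle. You appear to be conflating the Weyl-group data (reflections of order two along the walls) with the isotropy data. In the paper's actual examples the edge isotropies are one-dimensional (e.g.\ $\Delta(Q_8\cdot e^{i\theta})$ in the $8$-dimensional case), so $K_i/H$ is a circle, not $\mathbb{Z}_2$.

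\textbf{The gap.} The rational ellipticity of $M$ is the entire content of the theorem, and your proposal does not prove it: ``I would compute a Sullivan minimal model'' and ``a differential cancellation in the Leray--Serre spectral sequence'' are not arguments. There is no mechanism offered that would make the computation tractable for a general $\ell$-gon built this way.

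\textbf{What the paper does instead.} Rather than constructing $M$ from the polygon and then attempting to determine its rational type, the paper goes in the opposite direction: it starts from concrete manifolds already known or directly computed to be rational spheres---the Wu manifold $W=SU(3)/SO(3)$, the Brieskorn variety $\mathcal{B}$, and a specific $8$-manifold $\mathcal{M}$ taken from \cite{GZ12}---each carrying a polar action with the correct group and a triangular (or rectangular) quotient with prescribed angles. For $\mathcal{M}$ the rational cohomology is computed by two Mayer--Vietoris sequences along the orbit-type stratification, with Poincar\'e duality fixing the one undetermined integer. One then takes equivariant connected sums at fixed points to obtain $W^{\#\ell}$, $\mathcal{B}^{\#\ell}$, $\mathcal{M}^{\#k}$: connected sums of rational spheres are rational spheres, hence rationally elliptic, and the quotients are polygons with $\ell+2$, $2\ell+2$, $k+2$ sides respectively; a genuinely hyperbolic invariant metric is supplied by \cite{Men16}. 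In this approach rational ellipticity is essentially free once the building blocks are chosen, and no spectral-sequence cancellation argument is required.
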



\begin{rem}
We remark that Theorem \ref{T:counterex} is optimal
in several ways:
\begin{itemize}
\item First, the group $SO(3)$ has the lowest possible dimension for a counterexample to Conjectures \ref{Conj:mainconj1} and \ref{Conj:mainconj2}, since Lie groups of dimension $<3$ are abelian hence they satisfy $(P)$.
\item Furthermore, the lowest dimension of a counterexample is indeed 5: in fact, since variationally complete actions do not have exceptional orbits, a simply connected $G$-manifold ($G$ connected) of dimension $\leq 4$ with a hyperbolic quotient $M/G$ must have dimension equal to 4, with principal orbits having dimension 2, singular orbits projecting to boundary strata having dimension 1, and singular orbits projecting to the corners of $M/G$ corresponding to fixed points. At the fixed points, the isotropy representation of $G$ is polar with $2$-dimensional sections, hence $G$ is a torus, thus satisfying the conjectures by Theorem \ref{T:abelian}.
\item Finally, an 8-dimensional example has quotient a hyperbolic triangle, 
with the lowest possible number of faces.
\end{itemize}

\end{rem}

The counterexamples in Theorem \ref{T:counterex} turn out to provide a negative answer to the following:

\begin{mainprob}[Wilhelm/Radeschi--Samani]\label{P:WRS}
    Assume $M$, $M'$ are closed, simply connected manifolds, with isometric actions by Lie groups $G$ on $M$, and $G'$ on $M'$, such that $M/G$ is isometric to $M'/G'$. Is it true that $M$ is rationally elliptic if and only if $M'$ is rationally elliptic?
\end{mainprob}

We actually find infinite families of manifolds with isometric quotients but with different rational homotopy behaviour:

\begin{mainthm}\label{T:negative-answer} (Negative answer to Problem \ref{P:WRS})\label{T:neg-ans}There exist infinite pairs of manifolds $(M_\ell=W^{\# \ell},G=SO(3))$, $(N_\ell=SU(3)^{\# \ell},H=SU(3))$ with $M_\ell$ rationally elliptic, $N_\ell$ rationally hyperbolic, and $M_\ell/G$ isometric to $N_\ell/H$.
\end{mainthm}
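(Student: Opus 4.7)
The plan is to construct both families $N_\ell = SU(3)^{\#\ell}$ and $M_\ell = W^{\#\ell}$ by iterated equivariant connected sums, starting from a base pair whose orbit spaces coincide. For $N_1$ take $SU(3)$ with the conjugation action of $H = SU(3)$; for $M_1 = W$ take the Wu manifold $SU(3)/SO(3)$ equipped with the Hermann action of $G = SO(3) \subset SU(3)$ by left translation. Both actions are hyperpolar, and via the Cartan decomposition $\mathfrak{su}(3) = \mathfrak{so}(3) \oplus \mathfrak{p}$ their orbit spaces are, after a suitable rescaling, both isometric to the fundamental alcove $\Delta$ of type $A_2$, a flat $2$-dimensional triangle. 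Each action has exactly three fixed points: the centres $\{I,\omega I,\omega^2 I\}$ for the conjugation action on $SU(3)$, and the cosets $\{eSO(3),\omega SO(3),\omega^2 SO(3)\}$ corresponding to $N_{SU(3)}(SO(3)) = SO(3)\cdot Z(SU(3))$ for the action on $W$. These correspond to the three vertices of $\Delta$ in both quotients.

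The key geometric ingredient is the match between the slice representations at fixed points. At a central element of $SU(3)$ the slice is $\mathfrak{su}(3)$ under $\Ad(SU(3))$, while at a fixed coset of $W$ it is $\mathfrak{p}$ under $\Ad(SO(3))$; although these have different dimensions ($8$ vs.\ $5$), both are polar with the same Cartan subspace $\mathfrak{a} \subset \mathfrak{p}$ and the same Weyl group $S_3$, so the induced quotient cones are isometric to a neighbourhood of the corresponding vertex of $\Delta$. This allows me to form, for each $\ell \geq 2$, equivariant connected sums at matching fixed points in the same combinatorial pattern; the resulting polygonal orbit spaces $N_\ell/H$ and $M_\ell/G$, each built from $\ell$ copies of $\Delta$ glued along truncation edges at chosen vertices, are therefore isometric.

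It remains to compare rational homotopy types. From $H^*(SU(3);\QQ) = \Lambda(x_3,x_5)$ and Mayer--Vietoris for connected sums, $N_\ell$ has Betti numbers $b_0 = b_8 = 1$ and $b_3 = b_5 = \ell$, so $\chi(N_\ell) = 2 - 2\ell < 0$ for $\ell \geq 2$; the Friedlander--Halperin inequality $\chi \geq 0$ for rationally elliptic spaces then forces $N_\ell$ to be rationally hyperbolic. Conversely the Wu manifold has $H^*(W;\QQ) \cong H^*(S^5;\QQ)$, and this rational-sphere property is preserved under connected sum, so $H^*(M_\ell;\QQ) \cong H^*(S^5;\QQ)$. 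A simply connected space with the rational cohomology of an odd sphere admits no non-trivial Massey products, so $M_\ell$ is formal and rationally equivalent to $S^5$, hence rationally elliptic. Since $b_3$ increases strictly with $\ell$, the pairs $(M_\ell,N_\ell)$ are mutually non-diffeomorphic, yielding an infinite family.

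The main obstacle is the geometric comparison described above: one must carefully normalise the metrics on $SU(3)$ and $SU(3)/SO(3)$ and verify that the equivariant tubular neighbourhoods at corresponding fixed points descend to isometric cone neighbourhoods in the orbit space, so that the iterated equivariant connected sums produce genuinely isometric quotients. The rational-homotopy assertions then follow from standard cohomological arguments once this geometric identification is in place.
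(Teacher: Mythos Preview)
Your proposal is correct and parallels the paper's construction, but diverges from it at two points worth noting.

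For the rational hyperbolicity of $N_\ell=SU(3)^{\#\ell}$, the paper does \emph{not} use the Euler characteristic. Instead it observes that the conjugation action has connected principal isotropy (a maximal torus), so property $(P)$ holds; then, invoking \cite{Men16} to put an $SU(3)$-invariant metric on $N_\ell$ with hyperbolic polygonal quotient, it applies Theorem~\ref{T:abelian} directly. Your Friedlander--Halperin argument ($\chi(N_\ell)=2-2\ell<0$ for $\ell\geq 2$) is more elementary and entirely self-contained, bypassing the whole Morse-theoretic machinery of the paper. The paper's route, on the other hand, exhibits Theorem~\ref{T:abelian} in action, which is presumably the point of including Theorem~\ref{T:neg-ans} here.

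For the isometry $M_\ell/G\simeq N_\ell/H$, the paper again appeals to \cite{Men16}, which guarantees that one can prescribe the quotient polygon (in particular, choose it to be the same hyperbolic $(\ell+2)$-gon on both sides). Your direct matching of slice representations at the fixed points is the right geometric intuition, but as you yourself flag, turning this into an honest isometry of quotients after connected sum requires a careful choice of invariant metrics in the gluing region; this is precisely the content that \cite{Men16} supplies, and without it the step remains a sketch rather than a proof. Once that gap is filled, your argument and the paper's coincide.
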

%
%
%
%
%

Rational ellipticity is also related to the concept of \emph{topological entropy} $h_{top}(M,g)$ of a Riemannian manifold $(M,g)$,  
by the work of Paternain and Petean \cite{PP04}, who proved in particular that a simply connected manifold that admits a metric with $h_{top}(M,g)=0$ is rationally elliptic. It is well-known that rationally elliptic manifolds possess metrics with positive topological entropy (in fact, a generic metric is of this type \cite{Con10}).  One however can ask whether every rationally elliptic manifold admits \emph{some} metric with zero topological entropy. Equivalently:

\begin{mainprob} \label{Prob:topent} Let $M$ be a simply connected manifold such that every
metric has positive topological entropy. Is $M$ rationally hyperbolic?
\end{mainprob}

The answer to Problem \ref{Prob:topent} is likely negative. A partial negative answer is provided by the counterexamples in Theorem \ref{T:counterex}.

\begin{mainthm}\label{T:pos-top-ent} The manifolds from Theorem \ref{T:counterex} are rationally elliptic, yet every $G$-invariant metric on $M$ has strictly positive topological entropy. 
\end{mainthm}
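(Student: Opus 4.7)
The rational ellipticity of these $M$ is part of the content of Theorem~\ref{T:counterex}, so the heart of the proof is the topological entropy claim. The strategy is to exploit the orbifold fundamental group $\Gamma := \pi_1^{\mathrm{orb}}(M/G)$: since $M/G$ is topologically a hyperbolic polygon with $\ell \geq 3$ (resp. $\ell \geq 4$) edges, $\Gamma$ is a cocompact hyperbolic Coxeter group generated by reflections in the edges with dihedral relations at the corners, and in particular has exponential word growth.

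Fix any $G$-invariant metric $g$ on $M$, and let $g_{\mathrm{quot}}$ denote the induced orbifold metric on $M/G$. The plan is to form the fiber product $\widetilde M := M \times_{M/G} \widetilde{M/G}$, where $\widetilde{M/G} \cong \mathbb{R}^2$ is the orbifold universal cover. The lifted metric $\tilde g$ on $\widetilde M$ is preserved by the isometric $\Gamma$-action induced from the deck action on $\widetilde{M/G}$, which is cocompact with quotient $M$; by Svarc--Milnor, $(\widetilde M, \tilde g)$ inherits exponential volume growth from $\Gamma$. Fixing generic principal points $x, y \in M$ with lifts $\tilde x, \tilde y \in \widetilde M$, for each $\gamma \in \Gamma$ there is at least one minimizing geodesic in $\widetilde M$ from $\tilde x$ to $\gamma \cdot \tilde y$; these project, away from the branching locus, to distinct geodesic segments in $(M, g)$ from $x$ to $y$. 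The number of $\gamma$ with $d_{\widetilde M}(\tilde x, \gamma \tilde y) \leq T$ is exponential in $T$ by Svarc--Milnor, so the counting function $n_T(x,y)$ grows exponentially in $T$ for generic $(x,y)$. By Ma\~n\'e's inequality for the topological entropy of the geodesic flow, this yields $h_{\mathrm{top}}(M, g) > 0$.

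The main obstacle is the technical handling of the branching locus: the map $\widetilde M \to M$ is ramified exactly over the singular $G$-orbits, so one must verify that $\widetilde M$ is a sufficiently nice Riemannian space (for instance, a good Riemannian orbifold, or a manifold off a measure-zero branching locus) on which the Svarc--Milnor argument and the geodesic-counting underlying Ma\~n\'e's inequality apply. A parallel approach, closer to Riemannian submersion theory, would be to produce the exponential family of geodesic segments in $M$ directly as horizontal lifts of closed orbifold geodesics on $(M/G, g_{\mathrm{quot}})$ (whose count grows exponentially by Manning's inequality on the orbifold universal cover); the main difficulty then shifts to verifying that such horizontal lifts extend smoothly as geodesics in $M$ across the singular strata, which reduces to a local analysis at each orbit type using the polar structure of the slice representations.
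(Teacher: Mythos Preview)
Your proposal correctly identifies the relevant group-theoretic input---the exponential growth of $\Gamma = \pi_1^{\mathrm{orb}}(M/G)$---but the route through the fiber product $\widetilde M = M \times_{M/G} \widetilde{M/G}$ is genuinely problematic, not merely technically incomplete. Over the singular strata the two maps being fibered have incompatible local structure: $M \to M/G$ collapses orbits of varying dimension while $\widetilde{M/G} \to M/G$ is a branched quotient by reflections, so $\widetilde M$ is in general not a manifold (already for the rotation action of $S^1$ on $S^2$ the fiber product acquires cone points over the poles), and the pullback of $g$ is not a Riemannian metric there. Restricting to the regular part gives an honest covering $\widetilde M^{\mathrm{reg}} \to M^{\mathrm{reg}}$, but this space is incomplete, so neither \v{S}varc--Milnor nor geodesic existence between arbitrary pairs applies. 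You flag this obstacle but do not resolve it, and the resolution is not routine.

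The paper avoids lifting $M$ altogether and instead \emph{projects}. The chain of inequalities is
\[
h_{\mathrm{top}}(M,g) \;\geq\; h_{\mathrm{top}}(\phi_H) \;\geq\; h_{\mathrm{top}}(N/\Gamma) \;=\; h_{\mathrm{top}}(N/\Gamma'),
\]
where $\phi_H$ is the geodesic flow restricted to the closed invariant set $H \subset SM$ of unit horizontal vectors, the second inequality uses that $\pi_*: H \to S(N/\Gamma)=(SN)/\Gamma$ is a surjective semiconjugacy onto the orbifold geodesic flow, and the equality passes to a finite-index torsion-free subgroup $\Gamma' \subset \Gamma$ so that $N/\Gamma'$ is a genuine closed surface. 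Manning's inequality on $N/\Gamma'$ then bounds $h_{\mathrm{top}}(N/\Gamma')$ below by the word-growth exponent of $\Gamma'$, which is positive since $\Gamma'$ is a surface group of genus $\geq 2$---a fact independent of the particular $G$-invariant metric $g$. All branching issues disappear because the only cover used is a finite cover of the two-dimensional orbifold.

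Your ``parallel approach'' via horizontal lifts is closer in spirit, but note that the paper never counts geodesics in $M$: the entropy-monotonicity framework (restriction to invariant subsets, factor maps) makes the reduction to $N/\Gamma'$ purely formal, and Manning's geodesic counting is invoked only on that smooth surface.
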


The proof of Theorem \ref{T:abelian} is obtained using a classical Morse-theoretic technique of understanding the rational ellipticity vs. hyperbolicity of $M$ via growth of the Betti numbers $b_i(\Omega_{p,q}(M))$ of its space $\Omega_{p,q}(M)$ of paths between points $p,q\in M$ (for $M$ connected, the homotopy type of this space does not depend on the choice of $p,q$). This is controlled by the growth of $b_i(\Omega_{p,L}(M))$, where $\Omega_{p,L}(M)$ denotes the space of paths in $M$ from a point $p$ to an orbit $L$. This in turn is controlled by the critical points of the energy functional $E(\gamma)=\int_0^1\|\gamma'(t)\|^2dt$. The main result in this direction is twofold: first, we show that property $(P)$ implies that the energy functional is \emph{perfect}, i.e. that every critical point contributes to the topology of $\Omega_{p,L}(M)$ (cf. Section \ref{SS:rat-hyp-vs-vol-growth}). This is done by showing that certain submanifolds of $\Omega_{p,L}(M)$ called \emph{Bott-Samelson cycles} are orientable, and their fundamental classes generate the (nontrivial) homology created by the critical points. Secondly, we show that if $E$ is perfect then $M$ is rationally elliptic if and only if $M/G$ is flat.

In fact, the group action is not really needed: the theorems we prove apply  to the more general setting of $M$ carrying a \emph{singular Riemannian foliation}, i.e. a partition by mutually equidistant submanifolds, such as the partition of $M$ into the orbits of an isometric Lie group action.

The paper is structured as follows: Sections \ref{S:preliminaries} through  \ref{S:the-proof} are devoted to the proof of Theorem \ref{T:abelian}. In Section \ref{S:preliminaries} we define singular Riemannian foliations and variationally complete foliations (which generalize variationally complete actions), and describe some of their fundamental properties. In Section \ref{S:growth and hyperbolicity} we prove that, if the energy function $E$ on $\Omega(M)$ is perfect, then the rational ellipticity/hyperbolicity of $M$ can be read through growth properties of the quotient of $M/G$. In Section \ref{S:perfectness} we introduce the Bott-Samelson cycles, and prove that property $(P)$ implies their orientability. In Section \ref{S:the-proof} we prove Theorem \ref{T:abelian}.

Finally, in Section \ref{S:counterexs} we provide the counterexamples of Theorems \ref{T:counterex} and \ref{T:negative-answer}, and in Section \ref{S:entropy} we discuss how they provide information about topological entropy.

\subsection*{Acknowledgements}
The authors would like to thank Manuel Krannich for pointing out the result in \cite{Wal99} which was instrumental in the proof of Theorem \ref{T:abelian}.

\section{Preliminaries on Singular Riemannian foliations}\label{S:preliminaries}


Given a Riemannian manifold $(M, {g})$, a singular Riemannian foliation (SRF) on $M$ is a partition $\fol$ of $M$ into smooth, connected, injectively immersed manifolds, called \textit{leaves}, satisfying two properties:
\begin{enumerate}
        \item There exists a family of vector fields $\{X_\alpha\}_\alpha$ on $M$ such that for each leaf $L\in \fol$ and each $q\in L$, $T_q(L)=\text{span}\{X_\alpha(q)\}_\alpha$; 
        \item Every geodesic $\gamma$ of $(M,{g})$ that is orthogonal to a leaf at one point, remains orthogonal to all the leaves it intersects (such a $\gamma$ is called \textit{horizontal geodesic}). 
    \end{enumerate}

 When the metric on $M$ is understood, we will denote a SRF $(M,{g},\fol)$, simply with $(M,\fol)$. Given a $G$-manifold $M$, i.e. a Riemannian manifold with an isometric action of $G$ ($G$ connected)
 on $M$, the partition $\fol=\{G\cdot p\}_{p\in M}$ of $M$ into the $G$-orbits gives a special example of singular Riemannian foliation, called a \emph{homogeneous foliation}.
\\

We refer the interested reader to \cite{SRF} for an extensive exposition of the structure theory of singular Riemannian foliations. We will recall here only the facts that will be relevant to us.

\subsection{Stratification and quotient}
As in the case of group actions, a singular Riemannian foliation $(M, \fol)$ induces a stratification of $M$ by the dimension of the leaves. The \emph{regular stratum} $\Sigma_{reg}$, consisting of leaves of maximal dimension (call it $\dim \fol$) is open and dense in $M$, and its leaves will be called \emph{regular}, any other leaf will be called \emph{singular}.

If the leaves are closed, the \emph{leaf space} $M/\fol=M/\!\!\!\sim$ (where $p\!\sim\!q$ if $p,q$ belong to the same leaf) comes equipped with a distance function $d_{M/G}([p],[q])=d_M(L_p, L_q)$ where $L_p, L_q$ denote the leaves through $p$ and $q$ respectively. This turns $M/\fol$ into a Hausdorff metric space, and the canonical projection $\pi:M\to M/\fol$ sends every stratum onto an orbifold.
Define the \emph{principal stratum of $M$}, $\Sigma_{prin}\subseteq \Sigma_{reg}$, as the preimage of the manifold part of $\pi(\Sigma_{reg})$. In summary, we have:

\begin{defini}
Given a singular Riemannian foliation $(M,\fol)$ we call a leaf $L\in M$:
\begin{itemize}
\item \emph{principal}, if it is contained in $\Sigma_{prin}$.
\item \emph{exceptional}, if it is contained in $\Sigma_{reg}\setminus \Sigma_{prin}$.
\item  \emph{singular}, if it is contained in  $M\setminus \Sigma_{reg}$.
\end{itemize}
\end{defini}

In the homogeneous case, principal/exceptional/singular leaves coincide with the more classical notions of principal/exceptional/singular orbits.

In general, the following facts hold, and can be found for example in \cite{SRF}:
\begin{thm} \label{T:slice-thm}
Let $(M, \fol)$ be a singular Riemannian foliation with closed leaves, and let $L$ be a leaf. Then:
\begin{enumerate}
\item \cite[Corollary 2.32]{SRF}: There exists an $\epsilon>0$ small such that $B_\epsilon(L)$ is a union of leaves, and the restriction of the closest-point-map projection $B_\epsilon(L)\to L$ to a leaf $L'\subset B_\epsilon(L)$ is a submersion.
\item \cite[Proposition 2.17]{SRF}: Given a point $p\in L$, there exists a singular Riemannian foliation $(\nu_{p}L, \fol_p)$ with $0$ a closed leaf, a neighbourhood $P$ of $p$ in $L$, and a neighbourhood $O_\epsilon(P)$ of $p$ in $M$ given by
\[
O_\epsilon(P)=\nu^{\epsilon}P:=\{\exp(x)\mid x\in \nu P, \|x\|<\epsilon\}
\]
with a diffeomorphism $\varphi: O_\epsilon(P)\to P\times \nu_p^{\epsilon}L$ sending the connected components of $L'\cap O_\epsilon(P)$, $L'\in \fol$, diffeomorphically onto the leaves $P\times \mathcal{L}$, $\mathcal{L}\in \fol_p$.
\item \cite[Proposition 2.26]{SRF}: If $v_1, v_2\in \nu_p(L)$ belong to the same $\fol_p$-leaf, then $\exp_p tv_1$ and $\exp_ptv_2$ belong to the same $\fol$-leaf for any fixed $t\in \RR$.
\item \cite[Theorem 2.41]{SRF}: There exists a compact group $G_p$ acting on $\nu_{p}L$ by linear foliated isometries, with $G_p^0$ sending leaves to themselves, and a principal $G_p$-bundle $Q\to L$ such that if $v\in \nu_p^{\epsilon}L$ is contained in a $\fol_p$-leaf $\mathcal{L}_v$, then the closest-point-map projection $L_{\exp_p v}\to L$
is a locally trivial fiber bundle isomorphic to
\[
Q\times_{G_p}\mathcal{L}_v\to L,
\]
with fiber $(G_p/G_p^0)\cdot \mathcal{L}_v$.
\end{enumerate}
\end{thm}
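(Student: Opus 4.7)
The plan is to derive these four statements from the two defining properties of an SRF — the leaves form a smooth singular distribution spanned by vector fields, and every geodesic starting perpendicular to a leaf stays perpendicular to all leaves it meets. Since this material is compiled in the monograph \cite{SRF}, the proposal is to outline the conceptual route rather than reproduce the detailed arguments.

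For (1), I would pick $\epsilon$ smaller than the normal focal radius of $L$ (finite on compact pieces since $L$ is a closed leaf). Any $q \in B_\epsilon(L)$ has a unique foot $p(q) \in L$ and $q = \exp_p(v)$ with $v \in \nu_p L$; the geodesic $t \mapsto \exp_p(tv)$ is horizontal, so the defining axiom (2) forces it to remain perpendicular to every leaf it meets, in particular to the leaf $L_q$ through $q$. Hence $L_q \subset B_\epsilon(L)$ and the restriction of the foot map to $L_q$ is well-defined. Submersivity follows because the horizontal exponential parametrizes the tubular neighborhood diffeomorphically, so the differential of the foot map at $q$ sends $T_q L_q$ onto a subspace containing the holonomy image of $T_p L$.

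For (2) and (3), the main object is the \emph{infinitesimal foliation} $\fol_p$: declare $v \sim w$ in $\nu_p L$ iff $\exp_p(tv)$ and $\exp_p(tw)$ lie in the same leaf of $\fol$ for all sufficiently small $t$. One must verify that $\fol_p$ is itself an SRF on $\nu_p L$ equipped with the flat metric, that the normal exponential trivializes $\fol$ near $p$ as a product $P \times \nu_p^\epsilon L$, and that $\fol_p$ is scale-invariant. The radial invariance in (3) is the hardest piece: locally it is a definition, but to promote it to all $t \in \RR$ one must propagate along the horizontal geodesic using (1) together with the fact that leaves of $\fol$ saturate fibers of the foot map $B_\epsilon(L) \to L$. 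This is exactly the content of the Homothetic Transformation Lemma of Molino, suitably extended to the singular setting.

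For (4), I would construct $G_p$ as the closure of the linear holonomy group of $\fol_p$ along $L$: loops in $L$ based at $p$ give rise, via parallel transport and the radial invariance of (3), to linear isometries of $\nu_p L$ preserving $\fol_p$, and the closure of this group in $O(\nu_p L)$ is compact. The bundle $Q \to L$ is the associated principal $G_p$-bundle of admissible frames, and the identification $L_{\exp_p v} \cong Q \times_{G_p} \mathcal{L}_v$ is then a formal consequence of the associated-bundle construction. The main obstacle is precisely (3) and (4): the global radial invariance requires delicate control of horizontal geodesics, and realizing $G_p$ as a genuine compact group of linear foliated isometries — rather than a pseudogroup of local maps — is a nontrivial structural theorem whose full argument in \cite{SRF} one cannot realistically rederive from scratch.
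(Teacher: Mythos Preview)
The paper does not prove this theorem: it is stated purely as a compilation of known structural facts about singular Riemannian foliations, each item carrying an explicit citation to the monograph \cite{SRF}, and no argument is given in the paper itself. Your outline is a reasonable sketch of the standard route to these results (tubular neighbourhood via horizontal geodesics for (1), the infinitesimal foliation and the Homothetic Transformation Lemma for (2)--(3), and the linearized holonomy construction for (4)), and you correctly flag that the hard content lies in the global radial invariance and in realizing $G_p$ as an honest compact Lie group; but there is no proof in the paper to compare your proposal against.
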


\subsection{Variationally complete foliations and actions}\label{S:variationally-complete}

Given a leaf $L$ and a horizontal geodesic $\gamma$ starting at $L$, an \emph{L}-Jacobi field along $\gamma$ is a Jacobi field $J$ given by a variation through horizontal geodesics starting from $L$. In particular, $J(0)\in T_{\gamma(0)}L$.

\begin{defini}
A singular Riemannian foliation $(M,\fol)$ is called \emph{variationally complete}
\footnote{
Variationally complete foliations were introduced in \cite{LT07} under the name ``foliations without horizontal conjugate points''. However, if $\fol$ is homogeneous given by a $G$-action on $M$, by \cite[Proposition 2.1]{LT07} this notion is precisely 
the notion of \emph{variationally complete action} previously defined by Bott and Samelson \cite{BS58}, and we decided to keep the original name.}
if for every leaf $L$ and every horizontal geodesic $\gamma$ through $L$, one has the following: If an $L$-Jacobi field is tangent to $L_{\gamma(t)}$ for some $t\neq 0$ then it is tangent to  $L_{\gamma(t)}$ for all $t$.
\end{defini}

Notice that if $(M,\fol)$ is the foliation by points, then $\fol$ is variationally complete if and only if $M$ has no conjugate points. More generally:

\begin{thm}{\cite[Theorem 1.7]{LT10}}\label{T:criterion}
A singular Riemannian foliation $(M, \fol)$ with $M$ complete and 
with closed leaves is variationally complete if and only if $M/\fol$ is isometric to a quotient $N/W$ where $N$ is a Riemannian manifold with no conjugate points, and $W$ is a discrete group of isometries.
\end{thm}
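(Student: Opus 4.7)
The plan is to prove the two implications separately.

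For the easy direction, assume $M/\fol$ is isometric to $N/W$ with $N$ complete and without conjugate points, and $W$ a discrete group acting by isometries. Given a leaf $L$ and a horizontal geodesic $\gamma$ through $L$, the projected curve $\bar\gamma$ in $M/\fol$ lifts to a geodesic $\tilde\gamma$ in $N$. An $L$-Jacobi field $J$ along $\gamma$ is induced by a variation through horizontal geodesics from $L$; projecting this variation to $M/\fol$ and lifting it to $N$ yields a Jacobi field $\tilde J$ along $\tilde\gamma$ with $\tilde J(0)=0$, since $J(0)\in T_{\gamma(0)}L$ is vertical and projects to zero. If $J$ is tangent to $L_{\gamma(t_0)}$ for some $t_0\neq 0$, then the projected field $\bar J$ vanishes at $t_0$, so $\tilde J$ vanishes at both $0$ and $t_0$. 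Since $N$ has no conjugate points, $\tilde J\equiv 0$, so $J$ is tangent to the leaves for every $t$. This is exactly variational completeness.

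For the harder direction, assume $\fol$ is variationally complete. The strategy is to produce $N$ and $W$ so that $M/\fol=N/W$ as metric spaces with $N$ a Riemannian manifold without conjugate points, in three stages. First, show that $M/\fol$ is a good Riemannian orbifold: the slice theorem (Theorem \ref{T:slice-thm}) provides local models $\nu_pL/\fol_p$, and variational completeness of $\fol$ restricts to variational completeness of the infinitesimal foliation $\fol_p$ on $\nu_pL$, viewed as a linear SRF on Euclidean space. For such linear SRFs, variational completeness forces the quotient to be of the form $\RR^k/\Gamma$ for some finite $\Gamma\leq O(k)$, which supplies orbifold charts whose transition maps are orbifold isometries. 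Second, take $N$ to be the orbifold universal cover of $M/\fol$ and set $W=\pi_1^{\mathrm{orb}}(M/\fol)$, so that $M/\fol=N/W$; one must verify that this orbifold is developable, i.e. that $N$ is actually a manifold. Third, verify that $N$ has no conjugate points.

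For the last step, fix a geodesic $\tilde\gamma$ in $N$, project to $\bar\gamma$ in $M/\fol$, and horizontally lift to a geodesic $\gamma$ through a leaf $L$ (using completeness of $M$). A Jacobi field $\tilde J$ along $\tilde\gamma$ vanishing at $0$ determines an $L$-Jacobi field $J$ on $\gamma$, uniquely up to the subspace of $L$-Jacobi fields that are tangent to leaves for all $t$. If $\tilde J$ also vanishes at some $t_0\neq 0$, the corresponding $J$ is tangent to $L_{\gamma(t_0)}$; variational completeness then forces $J$ to be tangent to leaves for all $t$, so $\tilde J\equiv 0$.

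The main obstacle is the global part of the construction: while the local orbifold structure reduces to the linear case of the theorem via the slice theorem, establishing developability of $M/\fol$ (so that $N$ is a manifold and not merely a non-developable orbifold cover) is delicate, and must use variational completeness itself rather than only the local models. A natural route is to bypass the abstract orbifold-universal-cover construction and build $N$ directly by horizontal path lifting: fix a basepoint $p\in M$ and define $N$ as the space of suitable holonomy-equivalence classes of broken horizontal geodesics starting at $p$. Variational completeness is precisely the condition ensuring that the horizontal exponential map at each point of $N$ is a local diffeomorphism, so that $N$ inherits a manifold structure whose geodesics (by construction) are free of conjugate points, with $W$ emerging as the group of deck-type symmetries coming from foliated holonomy.
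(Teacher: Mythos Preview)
The paper does not prove this statement at all: Theorem~\ref{T:criterion} is quoted verbatim from \cite[Theorem 1.7]{LT10} and is used as a black box throughout (see for instance the proof of Proposition~\ref{P:coxeter}). There is therefore no proof in the paper to compare your proposal against.

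As to the proposal itself, the easy direction is broadly right in spirit, though you are implicitly assuming that horizontal $L$-Jacobi fields on $M$ correspond bijectively (modulo vertical fields) to Jacobi fields on $N$ vanishing at the basepoint; this requires knowing that the projection $M\to M/\fol$ is a Riemannian submersion on the regular part and that the orbifold geodesics in $M/\fol$ lift to genuine geodesics in $N$, which is exactly the content of the orbifold structure you have not yet established at that point of the argument. For the hard direction, you correctly isolate developability as the crux, but your two suggested routes are both incomplete. The first route (orbifold universal cover) needs an argument that the orbifold $M/\fol$ is good, and you do not supply one; the local models $\RR^k/\Gamma$ do not by themselves guarantee this. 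The second route (building $N$ from broken horizontal geodesics) is only a slogan: you would need to specify the equivalence relation, prove the quotient is Hausdorff and a manifold, and check that the induced metric has no conjugate points, none of which is immediate. In the actual Lytchak--Thorbergsson argument, variational completeness is first shown to imply that the foliation is infinitesimally polar (a step the present paper also invokes, in the proof of Proposition~\ref{P:coxeter}), which gives the orbifold structure on $M/\fol$; the absence of conjugate points on the universal cover then follows by a lifting argument close to what you sketch, but the developability and the manifold structure of $N$ require substantially more work than your outline indicates.
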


It follows in particular that the following actions are variationally complete:
\begin{itemize}
\item Polar actions with negatively-curved
section, or, more generally, whose section has no conjugate points:
In this case, $M/G$ is isometric to $N/W$ where $N$ is the section, and $W$ is its (discrete) \emph{Weyl group}. In this case, Theorem \ref{T:criterion} implies the claim.
\item Codimension 2 foliations with hyperbolic quotient.
\end{itemize}

By \cite[Lemma 5.3]{LT10},
if a singular Riemannian foliation $(M,\fol)$ has no horizontal conjugate points, then for any regular leaf $L$ and any horizontal geodesic $\gamma:[a,b]\to M$ from $L$, the index of $\gamma$ (i.e. the sum of multiplicities of focal points along $\gamma$) equals the sum $\sum_{t\in [a,b]}\dim L-\dim L_{\gamma(t)}$ (notice this sum is nonzero for finitely many values of $t$). In other words, a point $\gamma(t_0)$ is an $L$-focal point with multiplicity $m$ for $\gamma$ if and only if $\dim L-\dim L_{\gamma(t_0)}=m$.

We finish this section with a fact, well-known to experts but scattered around the literature. Recall that an isometry $\sigma:N\to N$ of a Riemannian manifold is called a \emph{reflection} if $\sigma^2=Id$ and its fixed point set has codimension 1 in $N$.

\begin{prop}\label{P:coxeter}
Let $(M, \fol)$ be a variationally complete foliation on a complete, simply connected manifold.
Then $\fol$ is closed and $M/\fol$ is isometric to a quotient $N/W$, where $N$ is simply connected and without conjugate points, and $W$ is discrete and generated by reflections.
\end{prop}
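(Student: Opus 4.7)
The plan is to establish the three conclusions — closedness of $\fol$, the existence of a simply-connected conjugate-point-free model $N$, and the reflection-generation of the Weyl group $W$ — in that order. First, for closedness, I would appeal to structure theory for variationally complete foliations: if $\bar\fol$ denotes the leaf closure (itself a singular Riemannian foliation by Molino's theorem), then the horizontal index formula stated just before the proposition — which equates focal multiplicities along any horizontal geodesic from a regular leaf with the corresponding leaf-dimension jumps — applied to the variationally complete foliations $\fol \subseteq \bar\fol$ (which share horizontal geodesics), forces these jumps to agree and hence $\fol = \bar\fol$.

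With closedness in hand, Theorem~\ref{T:criterion} produces an isometry $M/\fol \cong N_0/W_0$ where $N_0$ is without conjugate points and $W_0 \leq \mathrm{Iso}(N_0)$ is discrete. Passing to the Riemannian universal cover $\tilde\pi \colon N \to N_0$ preserves absence of conjugate points, and taking $W \leq \mathrm{Iso}(N)$ to be the preimage of $W_0$ under $\tilde\pi$ (which automatically contains the deck group $\pi_1(N_0)$) yields $N/W \cong M/\fol$ with $N$ simply connected.

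Next, to show $W$ is generated by reflections, I would identify such reflections with the codimension-one singular leaves of $\fol$. At a leaf $L'$ with $\dim L_{\mathrm{prin}} - \dim L' = 1$, Theorem~\ref{T:slice-thm}(4) combined with the index formula forces the slice-group action on the one-dimensional principal transversal to $L'$ to be by $\{\pm 1\}$, producing an involution $\sigma \in W$ whose fixed locus is a totally geodesic hypersurface of $N$. Let $W_{\mathrm{ref}} \leq W$ be generated by all such $\sigma$. Then $N/W_{\mathrm{ref}} \to N/W = M/\fol$ is an orbifold cover whose ramification lies in codimension $\geq 2$, and pulling it back along the projection $M \to M/\fol$ yields a Riemannian cover of $M$: at every codimension-$\geq 2$ singular leaf, the associated slice link is a sphere of dimension $\geq 1$, hence connected, leaving no room for monodromy. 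Simple connectedness of $M$ now forces this cover to be trivial, so $W_{\mathrm{ref}} = W$.

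The principal obstacle is the final "orbifold-to-manifold" lifting step: converting an orbifold cover of $M/\fol$ that ramifies only in codimension $\geq 2$ into an honest manifold cover of $M$. This relies on the detailed structural information in Theorem~\ref{T:slice-thm}(4) — the principal $G_p$-bundle $Q \to L$ and the product trivialization of the slice foliation — together with variational completeness, to rule out any hidden monodromy around higher-codimension singular leaves. By contrast, the codimension-one analysis is essentially a direct consequence of the horizontal index formula.
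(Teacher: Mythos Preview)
Your overall strategy is reasonable and parallels the paper's in spirit, but the crucial final step has a genuine gap. The assertion that pulling back the orbifold cover $N/W_{\mathrm{ref}}\to N/W$ along $\pi\colon M\to M/\fol$ ``yields a Riemannian cover of $M$'' is not justified, and the reason you give---that slice links are connected spheres---is insufficient. The map $\pi$ is not a fibration over singular strata, so the only thing you can honestly pull back is a cover of the \emph{principal} part $M_{\mathrm{reg}}\to \operatorname{Int}(M/\fol)$. Extending that cover across a singular stratum requires the monodromy around it to be trivial, and connectedness of the normal sphere says nothing about this: for a stratum of codimension~$2$ in $M/\fol$ the link is a circle, whose $\pi_1$ is infinite. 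You would need to argue that small loops in $M_{\mathrm{reg}}$ encircling singular strata map to loops in $\operatorname{Int}(M/\fol)$ that die in $\pi_1^{\mathrm{orb}}(M/\fol)/W_{\mathrm{ref}}$---which is essentially the statement you are trying to prove.

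The paper avoids this circularity by a different route: it first invokes the absence of exceptional leaves and the Coxeter-orbifold structure of $N/W$ (both from \cite{Lyt10}) to identify $\pi_1(\operatorname{Int}(N/W))$ with $\pi_1(N/W)$ as topological spaces, then shows the latter vanishes by a direct loop-lifting argument in $M$ (lift a loop in $M/\fol$ to a path in $M$, close it up inside the connected leaf, contract using $\pi_1(M)=0$), and finally applies \cite[Lemma~3.3]{GL14}. This is the same ``use simple connectedness of $M$'' idea as yours, but executed at the level of $\pi_1$ rather than covers, which sidesteps the extension problem entirely.

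A smaller issue: your closedness argument asserts without proof that the leaf closure $\bar\fol$ is again variationally complete and that the two foliations ``share horizontal geodesics'' in a way that lets you compare their index formulas. Neither is obvious---$\bar\fol$-horizontal geodesics are $\fol$-horizontal but not conversely, and the focal structure of $L$ versus $\bar L$ along a common geodesic are a priori different. The paper simply cites \cite[Theorem~1.2]{Lyt10} for closedness.
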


\begin{proof}
By \cite[Theorem 1.2]{Lyt10} the leaves are closed, and by \cite[Theorem 1.7]{LT10} the quotient is isometric to the quotient $N/W$ of a manifold without conjugate points, by a discrete group of isometries. In particular, by Theorem 1.4 in the same paper, the foliation is \emph{infinitesimally polar}, i.e. it is polar around every point
\footnote{We will not really need to use this condition, but we mention it to justify being able to apply a result later on, so we will not investigate this concept further.}.
By \cite[Theorem 1.2]{Lyt10}
there are no exceptional leaves, and by Theorem 1.8 in the same paper this implies that $N/W$ is a \emph{Coxeter orbifold}, i.e. it is locally diffeomorphic to the quotient of $\mathbb{R}^n$ ($n=\dim N$) by a linear group of reflections. 

We are left to check that $W$ is generated by reflections: let
\[
\operatorname{Int}(N/W)=(N/W)\setminus \partial(N/W)
\]
denote the interior of $N/W$ (see e.g the paragraph before Theorem 1.6 in \cite{Lyt10} for the definition of boundary $\partial(N/W)$). Since the underlying topological space of a Coxeter orbifold is a manifold with corners, it is homeomorphic to a manifold with boundary, hence
\todo{Rephrased as suggested}
$\pi_1(\operatorname{Int}(N/W))=\pi_1(N/W)$. We claim that $\pi_1(N/W)=0$, which is equivalent to showing that any loop $\gamma:[0,1]\to N/W=M/\fol$ is contractible. Lift $\gamma$ to a path $\bar{\gamma}:[0,1]\to M$. However, since $\gamma$ is closed, it follows that $\bar{\gamma}(0)$ and $\bar{\gamma}(1)$ are in the same (connected) leaf $L$, and there exists a path $\gamma_2:[0,1]\to L$ connecting them. Therefore $\bar{\gamma}\star\gamma_2$ is a loop in $M$, which is simply connected and therefore there exists a map $u:D^2\to M$ from the 2-disk, with boundary $\bar{\gamma}\star\gamma_2$. The projection $\pi\circ u: D^2\to M/\fol$ is then a map with boundary $\gamma$, which is thus contractible.

It follows that $\pi_1(\operatorname{Int}(N/W))=0$. Since there are no exceptional leaves and the singular leaves are exactly those projecting onto $\partial(N/W)$, it follows that the preimage of $\operatorname{Int}(N/W)$ is the principal stratum, and in particular $\operatorname{Int}(N/W)$ is a manifold, therefore the orbifold fundamental group $\pi_1^{orb}(\operatorname{Int}(N/W))$ is equal to the standard fundamental group $\pi_1(\operatorname{Int}(N/W))=0$. From Lemma 3.3 of \cite{GL14}, it then follows that $W$ is generated by reflections.
\end{proof}

The study of manifolds acted on by groups generated by reflections was carried out in \cite{AKLM07}, where the following properties were proved:

\begin{thm}\label{T:Reflections}
Let $N$ be a simply connected Riemannian manifold, and $W$ a group acting on $N$ generated by reflections. Then:
\begin{enumerate}
\item Every reflection in $W$ fixes a totally geodesic hypersurface called \emph{central hypersurface}. The complement of the union of central hypersurfaces is a disjoint union of open sets, called \emph{open chambers} \cite[page 34]{AKLM07}.
\item $W$ acts freely and transitively on the set of chambers: fixing a chamber $C$, any other chamber is of the form $w\cdot C$ for a unique $w\in W$ \cite[Theorem 3.5]{AKLM07}.
In particular, any chamber is a \emph{fundamental domain}, i.e.
\[
w\cdot C_{\hat{p}}\cap C_{\hat{p}}=\emptyset\,\,\,\forall w\neq e\quad\textrm{ and }\quad\bigcup_{w\in W}w\cdot \overline{C_{\hat{p}}}=N.
\]
\item Given a principal point $\hat{p}\in N$, the \emph{Dirichlet domain}
\[
C_{\hat{p}}=\{\hat{q}\in N\mid d(\hat{p},\hat{q})<d(\hat{p},w\cdot \hat{q}) \forall w\neq e\in W\}
\]
is the chamber containing $\hat{p}$ \cite[Corollary 3.8(1)]{AKLM07}.

\item The projection $N\to N/W$ restricts to a homeomorphism $\overline{C_{\hat{p}}}\to N/W$. The inverse $\varphi:N/W\to N$ thus identifies $N/W$ with $\overline{C_{\hat{p}}}$ \cite[Corollary 3.8(6)]{AKLM07}.
\item Every boundary component of $N/W$ is sent to an open set of a central hypersurface (called a \emph{wall}). 
\item $W$ is a coxeter group, generated by the reflections which fix the walls of $\overline{C_{\hat{p}}}$ \cite[Theorem 3.5]{AKLM07}.
\end{enumerate}
\end{thm}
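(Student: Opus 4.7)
Since the theorem compiles six structural properties of reflection groups on simply connected Riemannian manifolds, all drawn from \cite{AKLM07}, my plan is to organize the proof as a guided tour of those six clauses, relying on a small number of core ingredients: the fact that the fixed point set of an isometric involution is a disjoint union of totally geodesic submanifolds, the gallery description of walks between chambers, and a developing-map argument built on simple connectedness of $N$.

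For (1), I would start from the definition of a reflection $\sigma \in W$ (so $\sigma^2 = \mathrm{Id}$ with codimension-$1$ fixed set) and invoke the general fact that the fixed set of an isometry is totally geodesic; this gives the central hypersurfaces $H_\sigma$, and the open chambers are then the connected components of $N \setminus \bigcup_\sigma H_\sigma$. For (3) together with the existence half of (2), I use a reflect-the-tail argument: if $\hat{q} \in C_{\hat{p}}$ and the minimizing geodesic from $\hat{p}$ to $\hat{q}$ crossed some $H_\sigma$, reflecting its final segment across $H_\sigma$ would produce a strictly shorter path from $\hat{p}$ to $\sigma \hat{q}$, contradicting the defining inequality. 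Hence $C_{\hat{p}}$ sits inside the open chamber through $\hat{p}$, and equality follows by continuity together with the strictness of the Dirichlet inequality on chamber interiors.

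For the transitivity part of (2) I use a gallery argument: any chamber can be reached from $C_{\hat{p}}$ by a piecewise-geodesic path crossing walls transversally, and the product of the reflections across those walls yields the required $w \in W$. Freeness is the hard part and is where the simple connectedness of $N$ is essential. My plan is a developing-map argument: build the abstract chamber complex with vertices indexed by $W \cdot C_{\hat{p}}$ and edges recording single wall crossings, map it continuously to $N$, and use $\pi_1(N) = 0$ to rule out a nontrivial relation $w \cdot C_{\hat{p}} = C_{\hat{p}}$, since such a relation would descend to an essential loop in $N$. This freeness statement is the main obstacle; every other clause reduces to it plus local information.

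Once (2) and (3) are in hand, (4) is essentially formal: the map $\overline{C_{\hat{p}}} \to N/W$ is continuous and surjective by (2), and injective because two distinct points of $\overline{C_{\hat{p}}}$ in the same $W$-orbit would both lie on a common wall and be exchanged by the corresponding reflection, which in fact fixes them. Proper discontinuity of $W$ then upgrades the bijection to a homeomorphism, and (5) follows directly since the boundary of the chamber is exactly its intersection with $\bigcup_\sigma H_\sigma$. For (6), I read off the Coxeter presentation from codimension-$2$ strata of $\partial \overline{C_{\hat{p}}}$: two walls meeting at such a stratum locally bound a wedge of angle $\pi/m_{ij}$, forcing $(\sigma_i \sigma_j)^{m_{ij}} = e$; these relations together with $\sigma_i^2 = e$ suffice by Tits' theorem, while transitivity in (2) guarantees that the wall reflections of $\overline{C_{\hat{p}}}$ already generate all of $W$.
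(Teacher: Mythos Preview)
The paper does not prove this theorem. Theorem~\ref{T:Reflections} is stated as a summary of results already established in \cite{AKLM07}, with each clause carrying an explicit citation (page~34, Theorem~3.5, Corollary~3.8(1), Corollary~3.8(6), Theorem~3.5 again). The sentence introducing it reads ``The study of manifolds acted on by groups generated by reflections was carried out in \cite{AKLM07}, where the following properties were proved,'' and no argument follows. So there is no ``paper's own proof'' to compare your proposal against.

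Your sketch is a reasonable outline of how such results are typically established, and the gallery/developing-map strategy for (2) is indeed the standard route. That said, if you intend this as a self-contained proof rather than a pointer to \cite{AKLM07}, a few steps would need tightening. In (3), the inclusion $C_{\hat{p}}\subseteq$ (chamber) is fine, but your justification of the reverse inclusion (``equality follows by continuity together with the strictness of the Dirichlet inequality on chamber interiors'') is circular: that strictness is precisely what you are trying to prove. One usually argues instead that the Dirichlet domain is open, connected, and $W$-disjoint, hence a whole chamber. In (4), your injectivity step (``two distinct points of $\overline{C_{\hat{p}}}$ in the same $W$-orbit would both lie on a common wall and be exchanged by the corresponding reflection, which in fact fixes them'') is not justified as stated; one needs that the stabilizer of a boundary point is generated by the reflections in the walls through it, which is itself a nontrivial consequence of the Coxeter structure. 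Finally, in (6) you appeal to ``Tits' theorem'' without saying which statement you mean or why the local dihedral relations exhaust all relations in $W$; this is again where simple connectedness of $N$ does real work.
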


%

\section{Volume growth, group growth, and rational hyperbolicity}\label{S:growth and hyperbolicity}

Given a variationally complete foliation $(M,\fol)$, in this section we show a relation between quantities such as the volume growth of the universal cover $\widetilde{M/\fol}$, the growth of the orbifold fundamental group $\pi_1^{orb}(M/\fol)$, and the rational homotopy behaviour of $M$.

\subsection{Rational hyperbolicity vs volume growth}\label{SS:rat-hyp-vs-vol-growth}

Recall that a monotone function $f(r)$ has \emph{superpolynomial growth} if
\[
\lim_{r\to +\infty}{\ln f(r)\over \ln r}=+\infty.
\]
We say that a Riemannian manifold $N$ has \emph{superpolynomial volume growth} if the function $f(r)=\operatorname{Vol}(B_r(p))$ has superpolynomial growth (the choice of point $p$ does not matter). Similarly, we say that a finitely generated group $G$ has \emph{superpolynomial growth rate} if the function
\[
f(n)=\#\{g\in G\mid g\textrm{ is a product of at most $n$ generators}\}
\]
has superpolynomial growth (the choice of generators does not matter).

Finally, recall that a compact, simply connected manifold $M$ is \emph{rationally elliptic} if $\sum_i\dim \pi_i(M)\otimes \QQ<\infty$ and \emph{rationally hyperbolic} otherwise. The famous Rational Dichotomy \cite[Theorem 33.9]{FHT12} states
that $M$ is rationally hyperbolic if and only if $f(n)=\sum_{i=0}^nb_i(\Omega(M))$
has superpolynomial growth (where $\Omega(M)$ denotes the pointed loop space of $M$, and $b_i(\Omega(M))=\dim H^i(\Omega(M);\QQ)$), and in fact in this case the growth is at least exponential.
\\

Given $(M,\fol)$ a variationally complete singular Riemannian foliation on a compact, simply connected manifold, the goal of this section is to find conditions on $M/\fol$ that imply that $M$ is rationally hyperbolic. By Proposition \ref{P:coxeter}, $M/\fol$ is isometric to the quotient $N/W$ of a Riemannian manifold $N$ without conjugate points, and $W$ is a group generated by reflections. We will from now on fix one such isometry $M/\fol\to N/W$, and write interchangeably the orbit space as $M/\fol$ or $N/W$.

Fix a principal leaf $L$, a point $q\in L$, and a principal point $p\in M$ (i.e. a point in the principal stratum). Denote by $p_*, q_*\in N/W$ the projections of $p$ and $L$ respectively. Let $\Omega_{p,L}(M)$ denote the space of paths in $M$ starting at $p$ and ending in $L$ and let $\Omega_{p,q}(M)$ denote the space of paths in $M$ from $p$ to $q$. There is a homotopy fibration:
\[
\Omega_{p,q}(M)\to \Omega_{p,L}(M)\to L
\]
where the last map is evaluation at the endpoint. Since the action of $\pi_1(L)$ on $\pi_i(\Omega_{p,q}(M))$ factors through $\pi_1(M)=0$, it is trivial and in particular one can apply the Serre spectral sequence to obtain that $\sum_{i=0}^kb_i(\Omega_{p,q}(M))\geq {1\over C}\sum_{i=0}^kb_i(\Omega_{p,L}(M))$ where $C=\dim H^*(L)$. Therefore, if the growth of $\sum_{i=0}^kb_i(\Omega_{p,L}(M))$ is superpolynomial in $k$, then $M$ is rationally hyperbolic.

In order to estimate the growth of $\sum_{i=0}^kb_i(\Omega_{p,L}(M))$, we consider the Sobolev space 
\todo{Changed notation according to Alessandro's suggestion}
\[
W^{1,2}_{p,L}(M)=\left\{\gamma\in \Omega_{p,L}(M)\left|\, \int_0^1\|\gamma'(t)\|^2<\infty\right.\right\}\subset\Omega_{p,L}(M)
\]
of $W^{1,2}$-curves
from $p$ to $L$, and the energy functional
\[
E: W^{1,2}_{p,L}(M)\to \mathbb{R},\qquad E(\gamma)=\int_0^1\|\gamma'(t)\|^2dt.
\]
It is well known that the inclusion $W^{1,2}_{p,L}(M)\subset\Omega_{p,L}(M)$ is a homotopy equivalence, and furthermore $p$ can be chosen so that $E$ is a Morse function.
Critical points for $E$ are horizontal geodesics in $M$ from $p$ to $L$. For each critical point $\gamma$, we will denote its Morse index by $\ind(\gamma)$.

We use Morse Theory to connect the Betti numbers $b_k(W^{1,2}_{p,L}(M))=b_k(\Omega_{p,L}(M))$ with the critical points of $E$. For this, recall that the Morse function $E$ is \emph{perfect} if every critical point contributes to create more topology or, in other words, if for each $k$ the number of critical points of $E$ with index $k$ is precisely $b_k(W^{1,2}_{p,L}(M))$.

\begin{prop}\label{P:equivalence}
Let $(M,\fol)$ be a variationally complete foliation with $M$ compact,
and let $E:W^{1,2}_{p,L}(M)\to \RR$ be as above. If $E$ is a perfect Morse function, then the two conditions:
\begin{enumerate}
\item $N=\widetilde{M/\fol}$ has \emph{superpolynomial volume growth}.
\item $\pi_1^{orb}(M/\fol)$ has \emph{superpolynomial growth rate}.
\end{enumerate}
are equivalent and imply:
\begin{enumerate}[resume]
\item $M$ is rationally hyperbolic.
\end{enumerate}
Furthermore, if the principal leaves are
rationally elliptic, then condition $(3)$ implies conditions $(1)$ and $(2)$ as well.
\end{prop}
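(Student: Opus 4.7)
The plan is to use perfectness of $E$ to count critical points, identify these with $W$-translates of a lifted basepoint in $N$ via the projection $M\to M/\fol=N/W$, transfer the count to the growth of $W$ (equivalently of $N$) by Milnor--\v{S}varc, and finally transfer the growth between $\Omega_{p,L}(M)$ and $\Omega_{p,q}(M)\simeq \Omega M$ using the fibration $\Omega_{p,q}(M)\to \Omega_{p,L}(M)\to L$ and its extension to a Puppe sequence.

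By Proposition \ref{P:coxeter}, $M/\fol\cong N/W$ with $N$ simply connected without conjugate points and $W$ discrete, cocompact, and generated by reflections, so $\pi_1^{orb}(M/\fol)=W$. The Milnor--\v{S}varc lemma applied to the cocompact action of $W$ on $N$ gives $d_N(\tilde p_*, w\tilde q_*)\asymp |w|_W$ uniformly in $w$, and matches the volume growth of $N$ with the word-length growth of $W$, proving (1)$\Leftrightarrow$(2). Next, horizontal geodesics from $p$ to $L$ project to geodesics in $N/W$ from $p_*$ to $q_*$ and lift (uniquely, by the absence of conjugate points in $N$) to geodesics $\tilde\gamma_w$ in $N$ from $\tilde p_*$ to some $w\tilde q_*\in W\cdot\tilde q_*$; by Theorem \ref{T:slice-thm}(4) this yields a bijection $\gamma\mapsto w$ between critical points of $E$ and $W$, up to a uniformly bounded holonomy multiplicity. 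The Lytchak--Thorbergsson index formula gives
\[
\ind(\gamma_w)=\sum_t\big(\dim L-\dim L_{\gamma_w(t)}\big),
\]
with $t$ ranging over wall-crossings of $\tilde\gamma_w$. Since $N$ is simply connected each wall is a totally geodesic separating hypersurface, and by the no-conjugate-points condition a geodesic crosses each wall at most once; the number of wall-crossings of $\tilde\gamma_w$ therefore equals the number of walls separating the chamber of $\tilde p_*$ from that of $w\tilde q_*$, which is exactly $|w|_W$. Each summand lies in $[1,\dim L]$, so $|w|_W\leq \ind(\gamma_w)\leq (\dim L)|w|_W$.

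Combining this two-sided comparison with perfectness of $E$ gives
\[
\sum_{j\leq k}b_j(\Omega_{p,L}(M))=\#\{w\in W\,:\,\ind(\gamma_w)\leq k\}\asymp \#\{w\in W\,:\,|w|_W\leq k\},
\]
so the left-hand side grows superpolynomially iff $W$ does. The Serre spectral sequence of $\Omega_{p,q}(M)\to\Omega_{p,L}(M)\to L$ (trivial coefficients, $\pi_1(M)=0$) together with $\dim H^*(L;\QQ)=C<\infty$ gives the inequality $\sum_{k\leq n}b_k(\Omega_{p,q})\geq (1/C)\sum_{k\leq n}b_k(\Omega_{p,L})$ noted in the paper, which combined with the rational dichotomy yields (1), (2)$\Rightarrow$(3). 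For the converse under rational ellipticity of $L$, extend the fibration to the Puppe sequence $\Omega L\to\Omega_{p,q}(M)\to\Omega_{p,L}(M)$ and apply its Serre spectral sequence: rational ellipticity of $L$ forces polynomial growth of $\sum_{k\leq n}b_k(\Omega L)$, giving $\sum_{k\leq n}b_k(\Omega_{p,q})\leq \mathrm{poly}(n)\cdot\sum_{k\leq n}b_k(\Omega_{p,L})$, so rational hyperbolicity of $M$ forces superpolynomial growth of $\sum b_k(\Omega_{p,L})$, which the identification above transfers to superpolynomial growth of $W$ and of $N$. The most delicate step will be verifying the exact equality ``number of wall-crossings $=|w|_W$'' in the (possibly non-CAT(0)) no-conjugate-points setting, where the totally geodesic, separating nature of each wall and the uniqueness of geodesics are both essential.
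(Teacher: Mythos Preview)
Your overall strategy matches the paper's: identify critical points of $E$ with elements of $W$, establish a two-sided linear comparison between $\ind(\gamma_w)$ and $|w|_W$, and transfer growth between $\Omega_{p,q}(M)$ and $\Omega_{p,L}(M)$ via spectral sequences. Two steps, however, diverge from the paper and deserve comment.

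For the index--word-length comparison, the paper only uses the easy inequality $|w|_W\le\chi(w)$ (the sequence of wall-reflections along $\tilde\gamma_w$ is \emph{a} word for $w$), together with $\chi(w)\le\iota(w)$; for the \emph{upper} bound it invokes Lytchak's result bounding the number of focal points on an interval of given length, then \v{S}varc--Milnor, to get $\iota(w)\le A'\ell(w)+B'\le A''|w|_W+B''$. Your route is more direct: you claim $\chi(w)=|w|_W$ exactly, whence $|w|_W\le\iota(w)\le(\dim L)\,|w|_W$. Your argument that a geodesic in $N$ crosses each totally geodesic central hypersurface at most once is correct (uniqueness of geodesics forces any double intersection to make the geodesic lie in the hypersurface). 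But the final step, that the number of separating central hypersurfaces equals $\ell_S(w)$, is a Coxeter-theoretic fact which, while standard for the abstract Coxeter complex, requires importing the chamber-system combinatorics of \cite{AKLM07} into this geometric setting; you should cite this rather than assert it. If that is done, your bound is cleaner and avoids the Lytchak focal-point estimate entirely. (Also: the bijection between critical points and $W$ is exact --- there is no ``holonomy multiplicity'' to worry about, and Theorem~\ref{T:slice-thm}(4) is not the relevant reference; it follows from uniqueness of horizontal lifts and of geodesics in $N$.)

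For the converse $(3)\Rightarrow(1),(2)$, the paper instead uses the homotopy fibration $\Omega_{p,L}(M)\to L\to M$ (with simply connected base, hence trivial coefficients) together with \cite{KSR24} and \cite[Theorem~A.1]{GWY19}. Your Puppe-sequence fibration $\Omega L\to\Omega_{p,q}(M)\to\Omega_{p,L}(M)$ has base $\Omega_{p,L}(M)$, which is in general \emph{not} simply connected, so the Serre spectral sequence carries local coefficients and the naive bound $\dim E_2^{p,q}\le b_p(\Omega_{p,L}M)\cdot b_q(\Omega L)$ is not automatic. This gap can be patched: since $E$ is perfect, $\Omega_{p,L}(M)$ has a CW model with exactly $b_p$ cells in each degree $p$, and cellular cohomology with any local system of fibre dimension $b_q(\Omega L)$ then satisfies $\dim H^p(\Omega_{p,L}M;\mathcal{L})\le b_p\cdot b_q(\Omega L)$, recovering your inequality. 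You should make this explicit.
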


\begin{proof}
Since $W$ acts cocompactly on $N$, the equivalence between $(1)$ and $(2)$ is an immediate application of \v{S}varc-Milnor Lemma (cf \cite[Proposition 6.2.14]{Loh17}).

Assume now that $(1)$ or $(2)$ hold. As explained above, it is enough to prove that $\sum_{i=0}^kb_i(\Omega_{p,L}(M))$ grows superpolynomially in $k$. By the Morse inequalities, if $E$ is perfect then $\sum_{i=0}^kb_i(\Omega_{p,L}(M))$ equals the number of critical points of $E$ with index $\leq k$.

By the first variation formula for the energy, a curve $\gamma\in W^{1,2}_{p,L}(M)$ is a critical point if and only if it is a geodesic meeting $L$ perpendicularly, that is, a horizontal geodesic
$\gamma:[0,T]\to M$ from $p$ to $L$. These are in 1-to-1 correspondence with orbifold geodesics $\gamma_*:[0,T]\to M/\fol=N/W$ from $p_*$ to $q_*$, which in turn are in 1-to-1 correspondence with geodesics $\hat{\gamma}:[0,T]\to N$ from a fixed preimage $\hat{p}_0$ of $p_*$, to a point in the preimage of $q_*$. Fixing the point $\hat{q}_0$ in the preimage of $q_*$ that lives in the same chamber as $\hat{p}_0$, and choosing $q_*$ in the principal stratum for the action of $W$, the preimage of $q_*$ is $W\cdot \hat{q}_0$, which is in bijection with $W$ itself. Since $N$ is simply connected and without conjugate points, every pair of points is connected by a unique geodesic and thus the geodesics from $\hat{p}_0$ to $W \cdot \hat{q}_0$ are in 1-to-1 correspondence to the elements of $W$ themselves.

Summing up, there is a 1-to-1 correspondence between critical points of $E:W^{1,2}_{p,L}(M)\to \RR$ and elements of $W$, given as follows: for each $w\in W$, consider the unique geodesic $\hat{\gamma}_w:[0,T]\to N$ from $\hat{p}_0$ to $g\cdot \hat{q}_0$, denote with ${\gamma_*}_w:[0,T]\to N/W=M/\fol$ its projection to $N/W$, and finally define $\gamma_w:[0,T]\to M$ the unique horizontal lift of ${\gamma_*}_w$ starting from $p$.
\\

We now want to estimate the index of a critical point $\gamma_w$ in terms of its length. To do so, on $W$ define functions $\ell, \iota, \chi: W\to \RR$ given by:
\[
\ell(w)=\operatorname{length}(\gamma_w)\qquad \iota(w)=\operatorname{ind}(\gamma_w), \qquad \chi(w)=\#({\gamma_*}_w\cap \partial (M/\fol))
\]

We then have the following inequalities:
\begin{itemize}[leftmargin=0.6cm]
\item By \cite{Lyt09}, given a variationally complete foliation, the index $\operatorname{ind}(\gamma_w)$ equals the (finite) sum $\sum_{t\in [0,T]}(\dim L-\dim L_{\gamma_w(t)})$, i.e. the number of times, counted with multiplicity, the geodesic $\gamma_w$ intersects a singular leaf. Since singular leaves map precisely to points in $\partial(N/W)$, $\operatorname{ind}(\gamma_w)$ counts the number of times the orbifold geodesic ${\gamma_*}_w$ intersects the boundary of $M/\fol$, counted with multiplicity. Thus we get
\[
\chi(w)\leq \iota(w).
\]
\item Let $\varphi:N/W\to N$ denote the embedding of $N/W$ as the closed chamber containing $\hat{p}_0$ ($\varphi$ is the inverse of the projection $\pi$ in Theorem  \ref{T:Reflections}(4)).
Let $\wall_1, \ldots \wall_N$ denote the boundary strata of $N/W$, with $\varphi(\wall_i)$ being contained in the fixed point set of some reflection $s_i$. The set $S=\{s_1,\ldots s_N\}$ is a generating set for $W$ (cf. Theorem \ref{T:Reflections}(5)).
Furthermore, if ${\gamma_*}_w$ intersects, in order, the boundary strata $\wall_{\alpha(1)},\ldots, \wall_{\alpha(m)}$ (with $m=\chi(w)$) then
\[
\gamma_w(1)=s_{\alpha(1)}\cdot s_{\alpha(2)}\cdots s_{\alpha(m)}\cdot \hat{q}_0,
\]
therefore the minimal length of $w$ as a word in $S$ is $\leq \chi(w)$. On the other hand, by \v{S}varc-Milnor, the length of $w$ is bounded below by $A\ell(w)+B$ for some $A,B$, thus rearranging the terms we get
\[
A\ell(w)+B\leq \chi(w)
\]
\item Given a horizontal geodesic $\gamma:\RR\to M$ from $L$, by \cite{Lyt09}, there exist constants $\ell_0$ and $C$ such that the number of $L$-focal points on any interval $I_0$ of length $\leq \ell_0$
is at most $C$. Therefore, given an interval $I$ of length $\ell$, by subdividing $I$ into $\lceil {\ell\over \ell_0}\rceil +1$ intervals of length $<\ell_0$, we obtain that index of $\gamma$ on the interval $I$ is bounded above by
\[
C\left(\left\lceil {\ell\over \ell_0}\right\rceil +1\right)\leq {C\over \ell_0}\ell+2C
\]
Applying this to the geodesics $\gamma_w$, we get
\[
\iota(w)\leq A' \ell(w)+B'
\]
\end{itemize}

These inequalities give $A\ell(w)+B\leq \iota(w)\leq A'\ell(w)+B'$ and as a consequence
\[
\#B_{A\ell(w)+B}(e) \leq \#\{w\in W\mid \iota(w)\leq k\}\leq \#B_{A'\ell(w)+B'}(e),
\]
where $\#B_r(e)$ is the number of $w\in W$ whose shortest word in the alphabet $S$ has length $\leq r$.

Recalling that $\sum_{i=0}^kb_i(\Omega_{p,L}(M))=\#\{w\in W\mid \iota(w)\leq k\}$ and that $W=\pi_1^{orb}(N/W)=\pi_1^{orb}(M/\fol)$, this shows that $(2)$ implies $(3)$.

Finally, assume that the principal leaves of $\fol$ are rationally elliptic. By \cite[Theorem A]{KSR24} the leaves are nilpotent spaces, hence by \cite[Theorem A.1]{GWY19}
applied to the homotopy fibration $\Omega_{p,L}(M)\to L\to M$ gives that $\sum_{i=0}^kb_i(\Omega_{p,L}(M))$ grows exponentially, which by the inequalities above implies that $W=\pi_1^{orb}(N/W)$ has exponential growth as well.
\end{proof}

\begin{rem}
We note that, in Proposition \ref{P:equivalence}, the direction \((3) \Rightarrow (1),(2)\) does not rely on the perfectness of \(E\). Moreover, by applying Remark~4.2 of \cite{GWY19}, it suffices to assume that \(L\) admits a rationally elliptic \emph{finite cover} for this part of the proof.
\end{rem}

\section{Perfectness of the energy functional}\label{S:perfectness}
Let $(M, \fol)$ be a variationally complete foliation. In this section we find sufficient conditions on the leaves of $\fol$ that ensure the perfectness of the energy functional $E: W^{1,2}_{p,L}(M)\to \RR$ defined in the previous section.

One of the few known ways to prove that a functional is perfect is to construct \emph{completing cycles}, introduced by Bott and Samelson in \cite{BS58}, whose definition we will recall here. A reader can read about these results in Sections 10.2, 10.3 of \cite{PT06}, although our definitions diverge slightly.
\begin{defini}
Let $X$ be a (finite or infinite dimensional) Riemannian manifold, and let $f:X\to \RR$ be a Palais-Smale Morse function bounded below. Let $R$ be a ring,
and let $x\in X$ be a critical point such that $f$ does not contain other critical points in $(f(x)-\epsilon, f(x)+\epsilon)$ for some $\epsilon$. Let $\ind(x)=m$. A \emph{linking} (or \emph{completing}) \emph{cycle} for the critical point $x$ is, if it exists, a subspace $\Delta\subseteq X^{f(x)}:=f^{-1}(-\infty,f(x)]$, with the following properties:
\begin{enumerate}
\item $\Delta\cap f^{-1}(f(x))=x$.
\item The maps below are isomorphisms:
\[
H_m(\Delta;R)\to H_m(\Delta, \Delta\setminus\{x\};R)\to H_m(X^{f(x)+\epsilon},X^{ f(x)-\epsilon};R).
\]
\end{enumerate}
\end{defini}
The first condition says that $\Delta$ must contain $x$ and every other point of $\Delta$ must have value strictly lower than $x$. The second condition is met  if $\Delta$ is an orientable $m$-dimensional manifold, such that the Hessian of $f$ at $x$ is negative definite on $T_x\Delta$ (in which case the pair $(\Delta, \Delta\setminus \{x\})$ is homotopic to $(B_\delta(x)\cap X^{f(x)+\epsilon},B_\delta(x)\cap X^{f(x)-\epsilon})$ for some small $\delta$).

\begin{rem}
Notice that the notion of a subspace $\Delta\subseteq X$ being a completing cycle heavily depends on the choice of ring $R$ of coefficients being used in the homology in condition $(2)$.
\end{rem}
We recall the main property of completing cycles:
\begin{lemma}
Suppose that $f:X\to [0,\infty)$ is a Morse function all of whose critical points admit completing cycles. Then $f$ is perfect.
\end{lemma}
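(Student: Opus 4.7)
The plan is the standard Morse-theoretic induction: at each critical value $c$, the completing cycles provide preimages in $H_*(X^{c+\epsilon})$ for all generators of the relative homology $H_*(X^{c+\epsilon},X^{c-\epsilon})$, forcing the connecting homomorphism in the long exact sequence of the pair to vanish. The Betti numbers of the sublevel set $X^c$ then grow by exactly one in degree $m$ whenever one passes through a critical point of index $m$, giving the claimed equality $b_k(X)=\#\{x\in\operatorname{Crit}(f):\ind(x)=k\}$ in the limit.

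More precisely, fix a critical value $c$ with critical points $x_1,\dots,x_\ell$ of indices $m_1,\dots,m_\ell$ (finite by Palais-Smale), and choose $\epsilon>0$ so that $c$ is the only critical value in $[c-\epsilon,c+\epsilon]$. By a standard excision argument in infinite-dimensional Morse theory (together with the gradient-flow deformation retract of $f^{-1}(-\infty,c)$ onto $X^{c-\epsilon}$ furnished by Palais-Smale), one has
\[
H_k(X^{c+\epsilon},X^{c-\epsilon};R)\cong\bigoplus_{i:\,m_i=k} R,
\]
with the $i$th summand corresponding to the critical point $x_i$. For each such $i$, the fundamental class $[\Delta_i]\in H_{m_i}(\Delta_i)$ maps to a generator of the $i$th summand of $H_{m_i}(X^{c+\epsilon},X^{c-\epsilon})$ by condition~(2) of the definition. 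Since the inclusion $\Delta_i\hookrightarrow X^c\subseteq X^{c+\epsilon}$ shows that this map factors through $j_*\colon H_{m_i}(X^{c+\epsilon})\to H_{m_i}(X^{c+\epsilon},X^{c-\epsilon})$, we conclude that $j_*$ is surjective in every degree.

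Surjectivity of $j_*$ kills the connecting map $\partial$ in the long exact sequence of $(X^{c+\epsilon},X^{c-\epsilon})$, giving short exact sequences
\[
0\to H_k(X^{c-\epsilon})\to H_k(X^{c+\epsilon})\to H_k(X^{c+\epsilon},X^{c-\epsilon})\to 0.
\]
Taking $R$-ranks yields $b_k(X^{c+\epsilon})=b_k(X^{c-\epsilon})+\#\{i:m_i=k\}$. Using that $f\geq 0$ (so $X^c=\emptyset$ for $c<0$), induction over critical values, together with Palais-Smale giving deformation retracts across regular intervals and the directed-colimit behaviour of homology on $X=\bigcup_c X^c$, yields the desired identity.

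The main subtlety is in the first step above: ensuring that $\Delta_i\setminus\{x_i\}$, which a priori only sits in $f^{-1}(-\infty,c)$, actually represents a class relative to $X^{c-\epsilon}$. This is resolved by the Palais-Smale condition, which provides a gradient-like vector field along whose flow $f^{-1}(-\infty,c)$ deformation retracts onto $X^{c-\epsilon}$, thereby identifying $H_*(\Delta_i,\Delta_i\setminus\{x_i\})$ with the $i$th summand in $H_*(X^{c+\epsilon},X^{c-\epsilon})$.
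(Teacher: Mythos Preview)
Your proof is correct and follows essentially the same approach as the paper: both arguments show that the connecting homomorphism $\partial_*$ in the long exact sequence of the pair $(X^{c+\epsilon},X^{c-\epsilon})$ vanishes, using that the completing cycle provides a class in $H_m(X^{c+\epsilon})$ hitting the generator of the relative group. The paper phrases this via a commutative square with $(\Delta,\Delta\setminus\{x\})$ and the naturality of $\partial$, while you phrase the equivalent statement as surjectivity of $j_*$; you also spell out the induction over critical values and the colimit passage to $X$, which the paper absorbs into its one-line characterization of perfectness.
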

\begin{proof}
Recall that $f$ is perfect if and only if for every critical point $x$ with level set $c$ and index $m$, in the exact sequence
\[
 H_m(X^{c-\epsilon};R)\to H_m(X^{c+\epsilon};R)\to H_m(X^{c+\epsilon},X^{c-\epsilon};R)\stackrel{\partial_*}{\longrightarrow} H_{m-1}(X^{c-\epsilon};R)
\]
the map $\partial_*$ is zero. Suppose $\Delta$ is a completing cycle for $x$, and consider the row-exact diagram induced by the map $(\Delta, \Delta\setminus \{x\})\to (X^{c+ \epsilon},X^{c- \epsilon})$:
\begin{center}
\begin{tikzcd}[column sep=1em]
H_m(X^{c+ \epsilon};R)\arrow[r] & H_{m}(X^{c+ \epsilon},X^{c- \epsilon};R) \arrow[r,"\partial_X"] & H_{m-1}(X^{c- \epsilon};R) \\
H_m(\Delta;R)\arrow[r, "\simeq"]\arrow[u]
&H_m(\Delta, \Delta\setminus \{x\};R)\arrow[r,"\partial_\Delta"]\arrow[u,"\simeq"]&H_{m-1}(\Delta\setminus \{x\};R)\arrow[u]
\end{tikzcd}
\end{center}
Notice that $\partial_\Delta=0$ since the map before it is an isomorphism. By the commutativity of the right most square, we obtain that $\partial_X=0$ and the result is proved.
\end{proof}

The following lemma will give a characterization of completing cycles:

\begin{lemma}\label{L:on-completing-cycles}
Let $X$ be a manifold, $f:X\to \RR$ a Morse function, and $x\in f^{-1}(e)\subset X$
\todo{Added information about no other critical points}
a critical point for $f$. Suppose that $f$ has no other critical point in $f^{-1}([e-\epsilon,e+\epsilon])$ for some $\epsilon>0$. Chosen a ring $R$, suppose that $N\subseteq f^{-1}(-\infty, e]$ is an $R$-orientable submanifold of dimension $\dim N=\operatorname{ind}(x)$ such that $x\in N$ is the only critical point in $f^{-1}(e)\cap N$. Then $N$ is a completing cycle for $x$, possibly after flowing it via the flow of $-\nabla{f}$. 
\end{lemma}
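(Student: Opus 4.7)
The plan is to take $\Delta := \phi_t(N)$, where $\phi_s$ denotes the negative gradient flow of $f$, for $t \geq 0$ sufficiently large, and to verify the two defining conditions of a completing cycle. The key observation driving everything is that the Hessian $A := \operatorname{Hess}(f)_x$, regarded via the metric as a self-adjoint operator on $T_xX$, is negative semidefinite on $T_xN$: indeed, $N \subseteq f^{-1}(-\infty, e]$ and $f(x)=e$ force $x$ to be a maximum of $f|_N$. Since $f$ is Morse the operator $A$ is nondegenerate, so any nonzero $v \in T_xN \cap V^+$ (positive eigenspace of $A$) would give $A(v,v) > 0$; hence $T_xN \cap V^+ = 0$. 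Combined with $\dim T_xN = \operatorname{ind}(x) = \dim V^-$, this shows the projection $T_xN \to V^-$ along $V^+$ is a linear isomorphism.

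For condition (1), namely $\Delta \cap f^{-1}(e) = \{x\}$: the flow $\phi_s$ fixes $x$ and is non-increasing on $f$, so $x \in \Delta \subseteq f^{-1}(-\infty, e]$. For any $y \in N \setminus \{x\}$ with $f(y) = e$, the hypothesis that $x$ is the only critical point of $f$ in $f^{-1}(e) \cap N$ gives $\nabla f(y) \neq 0$, so $\frac{d}{ds}f(\phi_s(y))|_{s=0} = -\|\nabla f(y)\|^2 < 0$ and thus $f(\phi_t(y)) < e$ for every $t > 0$.

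For condition (2), I would exploit the fact that the linearization of $\phi_t$ at the fixed point $x$ is $e^{-tA}$, so $T_x\Delta = e^{-tA}(T_xN)$. Writing $v = v^- + v^+ \in V^- \oplus V^+$, the first paragraph gives $v^- \neq 0$ whenever $v \neq 0$; since $|e^{-tA}v^-|$ grows exponentially while $|e^{-tA}v^+|$ decays exponentially, the subspace $T_x\Delta$ converges in the Grassmannian to $V^-$ as $t \to \infty$. Since negative-definiteness of $A$ on an $m$-plane is an open condition in the Grassmannian (and is satisfied on $V^-$), for $t$ sufficiently large $A|_{T_x\Delta}$ is negative definite. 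Fix such a $t$.

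With negative-definiteness secured, the Morse lemma furnishes coordinates near $x$ in which $f - e = -|u^-|^2 + |u^+|^2$, and $\Delta$ is locally the graph of a smooth $\psi : V^- \to V^+$ with $\psi(0)=0$ and $\|d\psi(0)\| < 1$. A straight-line deformation $\psi_s := (1-s)\psi$ then identifies, up to homotopy of pairs, $(\Delta \cap B_\delta(x), \Delta \cap B_\delta(x) \setminus \{x\})$ with the descending cell pair $(D^-, \partial D^-)$; the inclusion of the latter into $(X^{e+\epsilon}, X^{e-\epsilon})$ is the standard Morse-theoretic excision and induces the isomorphism $R \cong H_m(D^-, \partial D^-; R) \cong H_m(X^{e+\epsilon}, X^{e-\epsilon}; R)$. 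The $R$-orientation of $\Delta$, together with the closedness of $N$ (which is implicit in the Bott--Samelson setup), gives $H_m(\Delta; R) \cong H_m(\Delta, \Delta \setminus \{x\}; R) \cong R$ via the (local) fundamental class, finishing condition (2). I expect the main technical obstacle to be the orientation/excision bookkeeping in this last paragraph, together with arranging that $\Delta \setminus B_\delta(x)$ sits inside $X^{e-\epsilon}$ so the map of pairs is defined: this uses compactness of $\Delta$ together with condition (1) to force $\max_{\Delta \setminus B_\delta} f < e$, after which one shrinks $\epsilon$ if necessary.
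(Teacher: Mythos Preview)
Your proof is correct and follows essentially the same route as the paper's: flow $N$ by $-\nabla f$, verify condition~(1) by the strict decrease of $f$ off the critical point, then use negative-definiteness of $\operatorname{Hess}(f)_x$ on the tangent space of the flowed cycle together with $R$-orientability to get condition~(2) via the standard Morse-theoretic identification with the descending cell.

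The one noteworthy difference is how negative-definiteness on $T_x\Delta$ is obtained. The paper simply asserts that after flowing for \emph{any} small positive time the Hessian is negative definite on $T_x\phi_t(N)$; this is true (one checks that $t\mapsto A(e^{-tA}v,e^{-tA}v)$ is strictly decreasing, and it is $\leq 0$ at $t=0$ by your semidefiniteness observation), but the paper does not spell it out. You instead let $t\to\infty$ and use that $e^{-tA}(T_xN)\to V^-$ in the Grassmannian, then invoke openness of negative-definiteness. Your argument is more explicit; the paper's is shorter but leaves the key monotonicity to the reader. Either works, and your remarks about needing compactness of $N$ for $H_m(\Delta;R)\to H_m(\Delta,\Delta\setminus\{x\};R)$ to be an isomorphism, and about arranging the map of pairs into $(X^{e+\epsilon},X^{e-\epsilon})$, are appropriate caveats that the paper also leaves implicit.
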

\begin{proof}
Possibly after replacing $N$ with its image under the flow of $-\nabla{f}$ for any arbitrarily small time, the conditions above imply that $N\subseteq f^{-1}(-\infty, e]$ and $N\cap f^{-1}(e)=\{x\}$, hence satisfying the first condition. In particular, the Hessian of $f$ is negative definite on $T_xN$ and by the dimension assumption, $T_xN$ has the same dimension as the direct sum of negative eigenspaces of $\operatorname{Hess}(f)_x$. By the standard Morse theory arguments, this implies that $H_{\dim N}(X^{ f(x)+\epsilon},X^{ f(x)-\epsilon};R)\simeq H_{\dim N}(N, N\setminus \{x\};R)$. Finally, since $N$ is an $R$-orientable manifold, we have $H_{\dim N}(N, N\setminus \{x\};R)\simeq H_{\dim N}(N;R)$ as well.
\end{proof}

Bott and Samelson described how to construct candidates of completing cycles for $E:W^{1,2}_{p,L}(M)\to \R$ in the case of variationally complete actions. These cycles were later generalized by Wiesendorf \cite{Wie14} and Nowak \cite{Now08}, the latter of which we will use here:

\begin{defini}
Let $(M, \fol)$ be a variationally complete foliation, $L$ a principal leaf, $p$ a principal point, and let $\gamma:[0,1]\to M$ be a critical point for $E:W^{1,2}_{p,L}(M)\to \R$. Let $0<t_1<\ldots<t_{N-1}<1$ be the focal times of $L$ along $\gamma$, and let $t_0=0$, $t_N=1$. Define the \emph{Bott-Samelson cycle} of $\gamma$ as the set
\[
\Delta^\gamma=\{c\in W^{1,2}_{p,L}(M)\mid c|_{[t_i,t_{i+1}]}\textrm{ horizontal geodesic},\, \pi(c)=\pi(\gamma)\},
\]
where $\pi:M\to M/\fol$ denotes the projection map onto the quotient. 
\end{defini}

We will show that $\Delta^\gamma$ is topologically an iterated sphere bundle. To begin with, we will assume that $p$ has been chosen generically, such that every orbifold geodesic between $p_*=\pi(p)$ and $q_*=\pi(L)$ in $N/W$ only intersects $\partial(N/W)$ in interior points of walls. Furthermore, for generic choice of $p$ and $L$, we can assume that the horizontal geodesics from $p$ to $L$ have pairwise distinct energy, meaning that $E:W^{1,2}_{p,L}(M)\to \R$ is a Morse function with exactly one critical point in each critical level set.
\todo{Marco: added the information, as suggested.}

\begin{lemma}
Let $(M, \fol)$ be a variationally complete foliation, with quotient map $\pi:M\to M/\fol=N/W$, and let $c:[0,1]\to N/W$ be a geodesic segment with $c([0,1))\in \operatorname{Int}(N/W)$ and $c(1)$ in the interior of a wall in $\partial(N/W)$. Let $L_{0}=\pi^{-1}(c(0)), L_{1}=\pi^{-1}(c(1))$. Then $c$ induces a map $\phi_c:L_0\to L_1$,
which is a linear $\sph^m$-bundle, with $m=\dim L_0-\dim L_1$.
%
\end{lemma}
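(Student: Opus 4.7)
The plan is to define $\phi_c$ via horizontal lifts and then analyze it by splitting off a diffeomorphism (interior holonomy) composed with a bundle map described by the slice theorem near $L_1$. For $x\in L_0$, let $\gamma_x:[0,1]\to M$ be the unique horizontal lift of $c$ starting at $x$; since $c([0,1))\subset\operatorname{Int}(N/W)$ lies in the regular stratum, the lift is smooth on $[0,1)$ and extends continuously at $t=1$ by closedness of leaves, yielding $\phi_c(x)=\gamma_x(1)\in L_1$. For any $t_0\in(0,1)$ the intermediate evaluation $\psi_{t_0}(x)=\gamma_x(t_0)$ is a diffeomorphism $L_0\to L_{c(t_0)}$ (inverted by horizontally lifting $c$ reversed), so it suffices to show that $\rho_{t_0}:L_{c(t_0)}\to L_1$, $\gamma_x(t_0)\mapsto\gamma_x(1)$, is a linear $\sph^m$-bundle for some $t_0$ close to $1$.

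Choose $t_0$ close to $1$ so that $L_{c(t_0)}$ lies in a tubular neighborhood of $L_1$ to which the slice theorem applies; then $\rho_{t_0}$ coincides with the restriction to $L_{c(t_0)}$ of the closest-point projection, and by Theorem \ref{T:slice-thm}(4) it is a fiber bundle with fiber $(G_p/G_p^0)\cdot\mathcal{L}_v$ at $p\in L_1$. To identify this fiber, I analyze the infinitesimal slice foliation $\fol_p$ on $\nu_p L_1$: its quotient is canonically isometric to a neighborhood of $c(1)$ in $N/W$, and since $c(1)$ lies in the interior of a codimension-$1$ wall this quotient is a half-space $\mathbb{R}^{n-1}\times\mathbb{R}_{\geq 0}$. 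Combined with variational completeness, this forces a $G_p$-invariant orthogonal decomposition
\[
\nu_pL_1=T_p\oplus F_p,\qquad \dim T_p=n-1,\ \dim F_p=m+1,
\]
in which $G_p$ fixes $T_p$ pointwise and acts on $F_p\cong\mathbb{R}^{m+1}$ through a subgroup containing $SO(m+1)$ acting transitively on concentric spheres. Consequently $\mathcal{L}_v=\{v_1\}\times\sph^m_{|v_2|}$ for $v=(v_1,v_2)\in T_p\oplus F_p$, and since $G_p/G_p^0$ acts on $F_p$ by linear isometries it preserves each sphere setwise, so $(G_p/G_p^0)\cdot\mathcal{L}_v=\mathcal{L}_v\cong\sph^m$.

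The subspaces $T_p$ and $F_p$ glue into smooth subbundles $T,F\subset\nu L_1$ of ranks $n-1$ and $m+1$. The canonical isomorphism between $T_p$ and the wall-tangent space at $c(1)$ makes $v_1$ into a smooth section of $T$ independent of $p$; setting $r=|v_2|$, the map $(p,w)\mapsto\exp_p(v_1(p)+rw)$ then identifies the unit sphere bundle $S(F)$ diffeomorphically with $L_{c(t_0)}$ as bundles over $L_1$. Composing with $\psi_{t_0}$ realizes $\phi_c$ as the unit sphere bundle of $F$, i.e.\ a linear $\sph^m$-bundle.

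The main obstacle is the rigidity statement in the second paragraph: extracting the product-with-concentric-spheres structure of $\fol_p$ purely from the knowledge that $\nu_p L_1/\fol_p$ is a half-space. This should follow from the classification of variationally complete foliations on Euclidean space with codimension-$1$ quotient (where they reduce to the product of a trivial foliation on $\mathbb{R}^{n-1}$ with concentric spheres on $\mathbb{R}^{m+1}$), but it is the delicate point that must be cited or justified carefully.
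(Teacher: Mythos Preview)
Your proposal is correct and follows essentially the same route as the paper: factor $\phi_c$ as a regular-holonomy diffeomorphism composed with the closest-point projection near $L_1$, then analyze the latter via the slice theorem and the infinitesimal foliation. The obstacle you flag---that the infinitesimal foliation at a wall-interior point splits as a trivial factor times concentric spheres---is exactly what the paper resolves by citing \cite[Proposition 3.2.5]{Mor19}, and the linearity of the sphere bundle (which you obtain by assembling the $F_p$ into a subbundle) is handled in the paper by invoking the Slice Theorem \cite{MR19} to see that the structure group lands in $O(V)$.
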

\begin{proof}
Let $X$ denote the horizontal vector field along $L_0$ that projects to $c'(0)$, and define $\Phi:L_0\times [0,1]\to M$ as $\Phi(p,t)=\exp_{p}tX_{p}$. By Equifocality (see \cite[Theorem 1.5]{AT08}), $\Phi(L_0\times\{t\})=\pi^{-1}(c(t))=:L_t$ and we have maps:
\[
\phi_{t}:L_0\to L_0\times\{t\}\stackrel{\Phi}{\longrightarrow}L_{t}.
\]
For any $t\in [0,1)$ the map $\phi_t$ is a diffeomorphism. Furthermore, for $\epsilon$ small enough, the leaf $L_{1-\epsilon}$ is contained in a small tubular neighbourhood of $L_1$ and, letting $k:L_{1-\epsilon}\to L_1$ denote the closest-point projection, we have $\phi_1=k\circ \phi_{1-\epsilon}$.

By Theorem \ref{T:slice-thm}(4) the map $k$ is a fiber bundle, with fiber equal to a regular leaf of the infinitesimal foliation $(\nu_qL_1,\fol_q)$ for any $q\in L_1$. Since $\pi(L_1)$ is a boundary point of $M/\fol$, by \cite[Proposition 3.2.5]{Mor19} the infinitesimal foliation is $(\nu_qL_1,\fol_q)=(V,\fol_c)\times (V^\perp,\{pts\})$ where $V\oplus V^\perp$ is an orthogonal splitting of $\nu_qL_1$ and $(V,\fol_c)$ is the foliation by concentric spheres around the origin. Furthermore, by the Slice Theorem \cite{MR19} the identity component of the structure group of $k:L_{1-\epsilon}\to L_1$ is contained in the group of leaf-preserving isometries of $(\nu_qL_1,\fol_q)$ (in this case isomorphic to $O(V)$) and in particular $k:L_{1-\epsilon}\to L_1$ has the structure of a linear sphere bundle, where the spheres have dimension equal to $\dim L_{1-\epsilon}-\dim L_1$. Since  $\phi_1=k\circ \phi_{1-\epsilon}$ and $\phi_{1-\epsilon}$ is a diffeomorphism, it follows that $\phi_c:=\phi_1:L_0\to L_1$ is a linear sphere bundle.
\end{proof}

\begin{rem}\label{R:map}
Given leaves $L$, $L'$, such that $L$ projects to the interior of a face $F$ of $M/\fol$ and $L'$ projects to its closure $\bar{F}$,  by the convexity
\todo{Marco: I made this setup more general, so it adapts better to Remark 4.9}
of $F$ in $N/W$ and the fact that $N$ has no conjugate points it follows that there is a map $\phi_c:L\to L'$ where $c:[0,1]\to \bar{F}$ is the unique geodesic segment from $\pi(L)$ to $\pi(L')$. To stress the dependence only on $L$ and $L'$ will call this map $\phi_{L, L'}$.
In particular, if $\pi(L)$ and $\pi(L')$ belong to the interior of the same face $F$, then $\phi_{L,L'}$ and $\phi_{L',L}$ are defined and inverses of one another, thus are diffeomorphisms between $L$ and $L'$.
\end{rem}

\begin{prop}\label{P:on-bott-sam-cycles}
Let $(M,\fol)$ be a variationally complete foliation with $M/\fol\simeq N/W$,  $L$ a principal leaf and $p$ a generic principal point. Let $\gamma:[0,1]\to M$ be a critical point for $E:W^{1,2}_{p,L}(M)\to \R$, with $0<t_1<\ldots<t_{N-1}<1$ the times $\gamma$ meets singular leaves, and let $t_0=0$, $t_N=1$. Then $\Delta^\gamma$ is an iterated sphere bundle. More precisely, there exist:
\begin{enumerate}
\item spaces $\Delta_0,\Delta_1,\ldots \Delta_N$,
\item regular leaves $L_0=L_p, L_1,\ldots L_N$, and singular leaves $L_1',\ldots L_N'$,
\item maps $F_i:\Delta_i\to L_i$, $i=0,\ldots, N$,
\end{enumerate}
such that
\[
\Delta_0=\{p\},\quad F_0(p)=\gamma(0),\quad \Delta_N=\Delta^\gamma, \quad F_N(c)=c(1),
\]
and $(\Delta_{k+1}, F_{k+1})$ are iteratively defined from $(\Delta_k,F_k)$ via the pullback diagram:
\begin{equation}\label{push1}
 \begin{tikzcd} [row sep=scriptsize, column sep=scriptsize]
 \Delta_{k+1}
\arrow[dd,]
\arrow[rrrr, "F_{k+1}"] && && L_{k+1} \arrow[dd, "\phi_{k+1}"]\\ \\
\Delta_{k} \arrow[rr, "F_k"]&& L_{k} \arrow[rr,"\phi'_{k+1}"]&
& L'_{k+1}
\end{tikzcd}
\end{equation}
where $\phi_{k+1}:=\phi_{L_{k+1}, L'_{k+1}}, \phi_{k+1}':=\phi_{L_{k}, L'_{k+1}}$ are the sphere bundles defined in Remark \ref{R:map}. In particular, $\Delta^\gamma$ is a manifold with $\dim \Delta^{\gamma}=\ind(\gamma)$.
\end{prop}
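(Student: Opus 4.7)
My plan is to construct the data $(\Delta_k, F_k, L_k, L'_k)$ by induction on $k$, with the pullback square \eqref{push1} built directly into the inductive step, and to identify $\Delta_N$ with $\Delta^\gamma$ at the end.

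For the base case I would set $\Delta_0 = \{p\}$, $L_0 = L_p$, and $F_0$ the inclusion; in general I view $\Delta_k$ as parameterizing those piecewise horizontal geodesics on $[0, t_k]$ that project to $\pi(\gamma)|_{[0, t_k]}$, with break points exactly at the singular crossing times $t_1, \ldots, t_{k-1}$, together with the choice of outgoing direction at $t_k$, so that the endpoint data lies on the regular leaf $L_k$ (i.e.\ the leaf of $\gamma$ just after the previous crossing). To pass from $\Delta_k$ to $\Delta_{k+1}$ one extends an element $c$ across the next singular crossing in two stages: first follow the unique horizontal lift of $\pi(\gamma)|_{[t_k, t_{k+1}]}$ starting at $F_k(c) \in L_k$, which lands at $\phi'_{k+1}(F_k(c)) \in L'_{k+1}$; then choose a horizontal outgoing direction at that singular endpoint projecting to $\pi(\gamma)'(t_{k+1}^+)$. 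By the preceding lemma together with Remark~\ref{R:map}, and by the infinitesimal description of the foliation at boundary points, these continuations are in natural bijection with the fiber of the linear sphere bundle $\phi_{k+1}\colon L_{k+1} \to L'_{k+1}$ over $\phi'_{k+1}(F_k(c))$, equivalently with points $q \in L_{k+1}$ satisfying $\phi_{k+1}(q) = \phi'_{k+1}(F_k(c))$. Setting $F_{k+1}(c, q) = q$ then exhibits $\Delta_{k+1}$ as the pullback in \eqref{push1}.

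Since sphere bundles are submersions and the pullback of a submersion along any smooth map is smooth, induction gives that each $\Delta_k$ is a smooth manifold with $\dim \Delta_k = \sum_{j=1}^{k}(\dim L_j - \dim L'_j)$. I would then check the identification $\Delta_N = \Delta^\gamma$: any element of $\Delta^\gamma$ decomposes uniquely as a sequence of such choices at each singular crossing, since between consecutive crossings the curve lies on the regular stratum and so the horizontal lift of the corresponding orbifold geodesic segment starting at a regular point is unique. The dimension formula $\dim \Delta^\gamma = \ind(\gamma)$ then follows by combining the above with the Lytchak identity $\ind(\gamma) = \sum_t (\dim L - \dim L_{\gamma(t)}) = \sum_j (\dim L_j - \dim L'_j)$ recalled in Section~\ref{S:variationally-complete}, together with the absence of exceptional leaves (Proposition~\ref{P:coxeter}), which guarantees $\dim L_j = \dim L$.

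The main obstacle I anticipate lies at the two ends of the iteration: the initial step where $p$ is a single point rather than a positive-dimensional leaf, and the terminal step where the curve must be joined to the regular target leaf $L$ instead of crossing a further singular leaf. Both degenerate cases should reduce to the uniqueness of horizontal lifts starting from a regular point, so they fit into the pullback framework with one of the maps $\phi_{k+1}$ or $\phi'_{k+1}$ trivialized (e.g.\ an equality of leaves or a $0$-dimensional sphere fiber), and hence contribute no additional sphere factor to $\Delta^\gamma$.
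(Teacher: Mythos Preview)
Your proposal is correct and follows essentially the same strategy as the paper. The only cosmetic difference is bookkeeping: the paper picks intermediate regular times $s_k\in(t_k,t_{k+1})$ and sets $\Delta_k=\{c|_{[0,s_k]}:c\in\Delta^\gamma\}$, $L_k=L_{\gamma(s_k)}$, $F_k(c)=c(s_k)$, whereas you encode the same data as ``curve on $[0,t_k]$ plus outgoing direction at $t_k$''; these are equivalent via the unique horizontal lift from $t_k$ to $s_k$, and both lead to the identical pullback square and dimension count.
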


\begin{proof}
Choose times $0=s_0<s_1<\ldots <s_{N-1}<s_N=1$ where $s_k\in (t_k, t_{k+1})$ for $k=1,\ldots N-1$, and let
\[
\Delta_k=\{c|_{[0,s_k]}\mid c\in \Delta^\gamma\}\qquad \textrm{and}\quad F_k:\Delta_k\to M,\quad F_k(c)=c(s_k).
\]
Clearly
\[
\Delta_0=\{\gamma(0)\}, \qquad F_0(\gamma(0))=\gamma(0)
\]
\[
\Delta_{N-1}\simeq \Delta_N=\Delta^\gamma, \qquad F_N(c)=c(1).
\]
Define restriction maps $\rho_k:\Delta_{k}\to \Delta_{k-1}$ given by $\rho_k(c)=c|_{[0,s_{k-1}]}$.

Since every curve $c\in \Delta^\gamma$ projects to the same curve in $M/\fol$ as $\gamma$, for any $k=0,\ldots N$ we have
\[
F_k(\Delta_k)=\{c(s_k)\mid c\in \Delta^\gamma\}\subseteq\pi^{-1}\pi(\gamma(s_k))=:L_{k}.
\]

Letting $L'_k:=L_{\gamma(t_k)}$, we consider the diagram 
\begin{equation}\label{push1}
 \begin{tikzcd} [row sep=scriptsize, column sep=scriptsize]
 \Delta_{k+1}
\arrow[dd,"\rho_{k+1}"]
\arrow[rrrr, "F_{k+1}"] && && L_{k+1} \arrow[dd, "\phi_{k+1}"]\\ \\
\Delta_{k} \arrow[rr, "F_k"]&& L_{k} \arrow[rr,"\phi_{k+1}'"]&
& L'_{k+1}
\end{tikzcd}
\end{equation}
The commutativity of this diagram follows because of the following observation: for any $c\in \Delta^\gamma$, since $\pi(c|_{[s_k, t_{k+1}]})$ (resp. $\pi(c|_{[t_{k+1}, s_{k+1}]}^{-1})$) is the unique geodesic segment from $\pi(L_k)$ to $\pi(L'_{k+1})$ (resp. from $\pi(L_{k+1})$ to $\pi(L'_{k+1})$) then
\[
\phi_{k+1}'{(c(s_k))=c(t_{k+1})=\phi_{k+1}}(c(s_{k+1})).
\]

By the universal property of the pushout, there is a map
\[
\Delta_{k+1}\to \Delta_k\times_{L_{k+1}'}L_{k+1},\qquad c|_{[0,s_{k+1}]}\mapsto (c|_{[0,s_{k}]}, c(s_{k+1}))
\]
which is a homeomorphism, with inverse
\begin{align*}
\Delta_k\times_{L_{k+1}'}L_{k+1}=\left\{(c|_{[0,s_k]}, p_{k+1})\mid c(t_{k+1})=\phi_{k+1}(p_{k+1})\right\}&\to \Delta_{k+1}\\
 (c|_{[0,s_k]}, p_{k+1})&\mapsto c|_{[0,s_k]}*\bar{c}_{p_{k+1}}
\end{align*}
where $\bar{c}_{p_{k+1}}$ is the unique horizontal geodesic segment from $c(t_{k+1})$ to $p_{k+1}$ projecting to $\gamma|_{[t_{k+1}, s_{k+1}]}$.
\end{proof}

\begin{rem}
The homotopy type of the map $\phi_{L, L'}:L\to L'$ does not depend on the principal leaf $L$, and it only depends on the boundary component that $\pi(L')$ lies on: in fact letting $L, \hat{L}$ denote principal leaves, and $L', \hat{L}'$ singular leaves in the same boundary component, it is possible to find in $N/W$ unique geodesic segments $c$ from $\pi(L)$ to $\pi(L')$, $\hat{c}$ from $\pi(\hat{L})$ to $\pi(\hat{L'})$, and furthermore a segment $\gamma_1$ from $\pi(L)$ to $\pi(\hat{L})$ entirely contained in the interior of $N/W$ as well as a segment $\gamma_2$ from $\pi(L')$ to $\pi(\hat{L}')$ entirely in the boundary face. Then $\phi_{L,\hat{L}}$ and $\phi_{L', \hat{L}'}$
are diffeomorphisms, and a homotopy $H(s,t)$ between $c(t)$ and $\gamma_1*\hat{c}*\gamma_2^{-1}(t)$ induces a homotopy $\phi_s:=\phi_{c_s}$
between $\phi_{L,L'}$ and $(\phi_{{L}',\hat{L}'})^{-1}\circ\phi_{\hat{L}, \hat{L}'}\circ\phi_{L, \hat{L}}$, where $c_s(t)=H(s,t)$.
\end{rem}

Finally, we end this section with a criterion for the perfectness of the energy functional:

\begin{prop}\label{P:when-delta-is-orientable}
Let $(M, \fol)$ be a variationally complete foliation, with $M/\fol\simeq N/W$. Suppose that for a principal leaf $L$ and any leaf $L'$ projecting to a boundary stratum, the pullback of $\phi_{L,L'}:L\to L'$ along $\phi_{L,L'}$ itself is orientable. Then the energy functional $E:W^{1,2}_{p,L}(M)\to \RR$ is perfect for a generic choice of $p$.
\end{prop}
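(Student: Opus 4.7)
The plan is to verify, for every critical point $\gamma$ of $E$, that the Bott-Samelson cycle $\Delta^{\gamma}$ from Proposition \ref{P:on-bott-sam-cycles} is a completing cycle at $\gamma$ in the sense of Lemma \ref{L:on-completing-cycles}; by the lemma preceding that one, this implies $E$ is perfect. Choose $p$ in the principal stratum generically, so that $E$ is Morse with pairwise distinct critical values. Since any horizontal lift of the orbifold geodesic $\pi(\gamma)$ has the same energy as $\pi(\gamma)$ itself, every $c\in\Delta^{\gamma}$ satisfies $E(c)=E(\gamma)$, and the only smooth (hence critical) element is $\gamma$. After a small $-\nabla E$ flow as permitted by Lemma \ref{L:on-completing-cycles}, the only remaining condition to check is the orientability of $\Delta^{\gamma}$ as a manifold of dimension $\ind(\gamma)$ (the dimension already being handled by Proposition \ref{P:on-bott-sam-cycles}).

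Orientability will be proved by induction on $k$ along the tower of Proposition \ref{P:on-bott-sam-cycles}, using the strengthened statement
\[
\mathcal{P}(k):\quad w_1(\Delta_k)=0\quad\text{and}\quad F_k^*w_1(L_k)=0\quad\text{in }H^1(\Delta_k;\ZZ/2).
\]
The base case $\Delta_0=\{p\}$ is vacuous. For the inductive step, the pullback diagram of Proposition \ref{P:on-bott-sam-cycles} exhibits $\pi\colon\Delta_{k+1}\to\Delta_k$ as the sphere bundle associated to the vector bundle $F_k^*(\phi'_{k+1})^*\xi_{k+1}$, where $\xi_{k+1}$ is the rank $(m_{k+1}+1)$ vector bundle over $L'_{k+1}$ with $L_k\cong L_{k+1}\cong S(\xi_{k+1})$ and $m_{k+1}=\dim L_k-\dim L'_{k+1}$. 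When $m_{k+1}\geq 1$, orientability of $L\times_{L'_{k+1}} L=S((\phi'_{k+1})^*\xi_{k+1})$ over $L_k$, combined with Gysin injectivity of $\pi_2^*$ on $H^1$, translates into the identity $w_1(L_k)=(\phi'_{k+1})^*w_1(\xi_{k+1})$ in $H^1(L_k;\ZZ/2)$. Plugging this into the Stiefel--Whitney formula $w_1(S(\eta))=\pi^*(w_1(B)+w_1(\eta))$ and invoking $\mathcal{P}(k)$ yields
\[
w_1(\Delta_{k+1})=\pi^*w_1(\Delta_k)+\pi^*F_k^*w_1(L_k)=0,
\]
while the commutativity $\phi_{k+1}\circ F_{k+1}=\phi'_{k+1}\circ F_k\circ\pi$ of the pullback square gives $F_{k+1}^*w_1(L_{k+1})=\pi^*F_k^*w_1(L_k)=0$, establishing $\mathcal{P}(k+1)$. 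The degenerate case $m_{k+1}=0$ (a double-cover step) is handled analogously using the simpler formula $w_1(S(\eta))=\pi^*w_1(B)$.

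The main subtlety is the choice of inductive statement. The weaker form ``$\Delta_k$ is orientable'' does not propagate, because the sphere-bundle Stiefel--Whitney computation for $\Delta_{k+1}$ involves $F_k^*w_1(L_k)$ in addition. Carrying both vanishing conditions in parallel is essential, and the hypothesis of the proposition is precisely calibrated so that the two contributions to $w_1(\Delta_{k+1})$ cancel inductively. Once the correct joint invariant $\mathcal{P}(k)$ is identified, the argument reduces to routine Stiefel--Whitney manipulations for sphere-bundle pullbacks.
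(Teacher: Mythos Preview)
Your proof is correct, but it diverges from the paper's because you read the hypothesis differently. The paper interprets ``the pullback of $\phi_{L,L'}$ along $\phi_{L,L'}$ is orientable'' as orientability of the pulled-back \emph{sphere bundle}, i.e.\ of its associated vector bundle: $\phi_{L,L'}^*w_1(\psi_{L'})=0$ (this is exactly the condition verified later in Proposition~\ref{P:conditions-delta-oriantable}). Under that reading the vertical bundle of $\rho_{k+1}\colon\Delta_{k+1}\to\Delta_k$ is the pullback of an orientable vector bundle, hence itself orientable, and the plain induction on ``$\Delta_k$ is orientable'' closes immediately:
\[
w_1(\Delta_{k+1})=\rho_{k+1}^*\bigl(w_1(\Delta_k)+F_k^*(\phi'_{k+1})^*w_1(\psi_{k+1})\bigr)=0+0.
\]
Your additional invariant $F_k^*w_1(L_k)=0$ is then unnecessary, and the ``main subtlety'' of your final paragraph evaporates.

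You instead read the hypothesis as orientability of the \emph{total space} $L\times_{L'}L$ as a manifold, from which Gysin injectivity only yields $w_1(L_k)=(\phi'_{k+1})^*w_1(\psi_{k+1})$ rather than the vanishing of the right-hand side. That is genuinely weaker information, and your joint hypothesis $\mathcal P(k)$ is then indeed required. Your argument is valid under this reading, so you have in effect proved a variant of the proposition under an alternative hypothesis that is neither stronger nor weaker in general; the two readings coincide precisely when the principal leaf $L$ is orientable.

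Two minor points: your energy identity $E(c)=E(\gamma)$ for all $c\in\Delta^\gamma$ is correct (the paper only states $E(c)\le E(\gamma)$, which is also true but not sharp); and the case $m_{k+1}=0$ never occurs, since leaves over boundary strata are singular and hence have strictly smaller dimension than principal leaves.
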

\begin{proof}
It is enough to prove that for any critical point $\gamma$ of $E$, the Bott-Samelson cycle $\Delta^\gamma$ is a completing cycle. It easy to check that $E(c)\leq E(\gamma)$ for any $c\in \Delta^\gamma$, and $\gamma$ is the only critical point of $E$ in $\Delta^\gamma$. Furthermore, by Proposition \ref{P:on-bott-sam-cycles} $\Delta^\gamma$ is a manifold with $\dim \Delta^\gamma=\ind(\gamma)$. By Lemma \ref{L:on-completing-cycles}, the only thing left to prove is that $\Delta^\gamma$ is $\mathbb{Z}$-orientable. For this, consider the sequence of fibrations $\rho_k:\Delta_{k}\to \Delta_{k-1}$ from Proposition \ref{P:on-bott-sam-cycles}. Since $\Delta_N=\Delta^\gamma$ and $\Delta_0=\{p\}$ is orientable, we prove by induction on $k$ that each $\Delta_k$ is orientable.

Recall that, letting $\xi_k, \psi_k$ denote the vector bundles associated to the sphere bundles $\rho_k$, and $\phi_k$ respectively, one has
\[
T\Delta_{k+1}=\rho_{k+1}^*(T\Delta_k)\oplus \rho_{k+1}^*(\xi_{k+1}),
\]
where $T\Delta_k$ is orientable by the induction step, and the bundle $\xi_{k+1}=F_k^*\circ \phi_{k+1}^*(\psi_{k+1})$ is orientable since $\phi_{k+1}^*(\psi_{k+1})$ is orientable by assumption. Therefore, $\Delta_{k+1}$ is orientable as well, and by induction $\Delta^\gamma$ is a completing cycle for $\gamma$.
\end{proof}
\section{Proof of Theorem \ref{T:abelian}}\label{S:the-proof}

The goal of this section is to use Proposition \ref{P:when-delta-is-orientable} to prove Theorem \ref{T:abelian}.
\begin{prop}\label{P:conditions-delta-oriantable}
Let $(M,\fol)$ be a variationally complete foliation. Suppose that either the principal leaves having $H^1(L;\mathbb{Z}_2)=0$, or $\fol$ is a homogeneous foliation (i.e. the leaves are orbits of an isometric $G$-action) whose principal isotropy group $H$ has $|\pi_0(H)|$ odd. Then $(M, \fol)$ satisfies the assumptions of Proposition \ref{P:when-delta-is-orientable}.
\end{prop}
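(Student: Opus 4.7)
The plan is to verify the hypothesis of Proposition \ref{P:when-delta-is-orientable}: for any principal leaf $L$ and any leaf $L'$ projecting to a boundary stratum, the pullback along $\phi_{L,L'}$ of the underlying vector bundle $\psi_{L,L'}$ of the sphere bundle $\phi_{L,L'}\colon L\to L'$ must be orientable over $L$. Equivalently, the first Stiefel--Whitney class $w_1(\phi_{L,L'}^*\psi_{L,L'})\in H^1(L;\ZZ_2)$ must vanish. In the case $H^1(L;\ZZ_2)=0$, this is immediate since the target cohomology group is zero, so no further argument is needed.

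The substantive work is the homogeneous case with $|\pi_0(H)|$ odd. Writing $L=G/H$ and $L'=G/K$ with $H\subseteq K\subseteq G$ the respective isotropies, the slice analysis used in the lemma preceding Remark \ref{R:map} provides a $K$-representation $W$ on which $K$ acts linearly with principal orbit $K/H=\sph^m$, and identifies $\phi_{L,L'}$ as the sphere bundle of the $G$-homogeneous vector bundle $\psi_{L,L'}\cong G\times_K W\to G/K$. A short computation then shows that $\phi_{L,L'}^*\psi_{L,L'}\cong G\times_H W$, where $W$ is viewed as an $H$-representation by restriction from $K$. For a $G$-homogeneous vector bundle of this form (with $G$ connected), the first Stiefel--Whitney class is detected by the orientation character $\det_W\colon H\to \{\pm 1\}$, which factors through $\pi_0(H)$ since $H^0$ is connected. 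When $|\pi_0(H)|$ is odd, every homomorphism $\pi_0(H)\to\ZZ_2$ is trivial by Lagrange, so $\det_W|_H$ is trivial and the bundle is orientable.

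The one fact that requires care is the claim that $w_1$ of a $G$-homogeneous vector bundle $G\times_H W\to G/H$ equals the reduction mod $2$ of the orientation character $\det_W\colon H\to\{\pm 1\}$. I expect this to follow from naturality of $w_1$ under the classifying map $G/H\to BH$, together with the identification $H^1(BH;\ZZ_2)\cong \operatorname{Hom}(\pi_0(H),\ZZ_2)$; alternatively, triviality of $\det_W$ on $H$ reduces the structure group of the associated bundle from $H$ to $H\cap GL^+(W)$, producing the orientation directly. Either way, this reduction is the main technical step.
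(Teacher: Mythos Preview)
Your proposal is correct and follows essentially the same approach as the paper. Both arguments handle the $H^1(L;\ZZ_2)=0$ case trivially, and in the homogeneous case both identify the sphere bundle $\phi_{L,L'}$ with the unit sphere bundle of the associated bundle $G\times_K V\to G/K$ for a $K$-representation $V$, and then show that the restriction of this representation to $H$ lands in $SO(V)$ because any homomorphism $\pi_0(H)\to\ZZ_2$ is trivial when $|\pi_0(H)|$ is odd. The paper carries this out by chasing $w_1\in H^1(BO(V);\ZZ_2)$ around a commutative square of classifying spaces $BH\to BK\to BO(V)$ and $BH\to BSO(V)\to BO(V)$, while you phrase it as computing $\phi_{L,L'}^*\psi_{L,L'}\cong G\times_H V$ and reading off $w_1$ from the orientation character via $G/H\to BH$; these are the same computation in slightly different packaging.
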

\begin{proof}
Given a principal leaf $L$ and a singular leaf $L'$ in a boundary stratum, let $w_1(L,L')\in H^1(L';\ZZ_2)$ denote the first Stiefel-Whitney class of $\phi_{L,L'}:L\to L'$. To prove the proposition it is enough to show that $$\phi_{L,L'}^*(w_1(L,L'))=0.$$

In the first case $H^1(L;\mathbb{Z}_2)=0$, the result trivially holds, so let us assume that $\fol$ is homogeneous, the principal isotropy group is $H$, and the conjugacy class of the stratum of $L'$ is $K$.
Notice that $\phi_{L,L'}$ is $G$-invariant, and up to conjugating $H$ and $K$ it can be assumed to be the homogeneous bundle
\begin{equation*}
 K/H\to G/H\stackrel{\phi_{L,L'}}{\longrightarrow} G/K.
\end{equation*}
This is the pullback of $K/H\to BH\to BK$ via the classifying map $\mu_K:G/K\to BK$. Being $\phi_{L,L'}$ a linear sphere bundle, there is a representation $\eta:K\to O(V)$ on some vector space, acting transitively on the unit sphere, such that $H$ is the isotropy group at some point $v\in V$. Since by assumption $H$ has an odd number of connected components,  the restriction $\eta(H)$ is contained in $SO(V)$. We thus have the following commutative diagram:

\begin{equation}\label{E:bundles}
 \begin{tikzcd}
K/H\arrow[d,] \arrow[r,]&K/H \arrow[d,]\\
G/H\arrow[d,"\phi_{L,L'}"] \arrow[r,"\mu_H"] & BH\arrow[d,"B\iota"] \arrow[r,"B(\eta|_H)"] & BSO(V)\arrow[d,"B\iota"]\\
G/K\arrow[r,"\mu_K"] & BK \arrow[r,"B\eta"] & BO(V)\\
\end{tikzcd}
\end{equation}
Since $w_1(L,L')=\mu_K^*B\eta^*(w_1)$ where $w_1\in H^1(BO(V);\ZZ_2)$ is the generator, the pullback $\phi_{L,L'}^*(w_1(L,L'))$ is equal to
\[
\phi_{L,L'}^*\circ\mu_K^*\circ B\eta^*(w_1)
\]
which, by the commutativity of \eqref{E:bundles}, is equal to $\mu_H^*\circ B(\eta|_H)^*\circ B\iota^*(w_1)=0$ because $B\iota^*(w_1)\in H^1(BSO(V);\ZZ_2)=0$. 
\end{proof}

An application of Proposition  \ref{P:when-delta-is-orientable} to the non-homogeneous case, is the following:

\begin{cor}
Let $(M, \fol)$ be a compact simply connected manifold with a variationally complete foliation. Suppose that either the principal leaves having $H^1(L;\mathbb{Z}_2)=0$, or $\fol$ is a homogeneous foliation whose principal isotropy group $H$ has $|\pi_0(H)|$ odd. Then, if $M/\fol$ has superpolynomial growth, $M$ is rationally hyperbolic.
\end{cor}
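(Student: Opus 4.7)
The plan is to assemble the corollary directly from the three preceding results in a short chain. The hypotheses of the corollary are exactly what Proposition \ref{P:conditions-delta-oriantable} requires, and its conclusion is exactly what Proposition \ref{P:when-delta-is-orientable} needs as input, whose conclusion in turn feeds the "perfectness" hypothesis of Proposition \ref{P:equivalence}. So the proof is a two-line invocation.

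More concretely, first I would note that by Proposition \ref{P:coxeter}, $(M,\fol)$ being variationally complete on a simply connected $M$ already gives $M/\fol \simeq N/W$ with $N$ simply connected and without conjugate points and $W$ discrete generated by reflections, so we are in the setting of Proposition \ref{P:equivalence}. Next, under either of the two stated hypotheses (principal leaves with $H^1(L;\mathbb{Z}_2) = 0$, or homogeneous $\fol$ with $|\pi_0(H)|$ odd), Proposition \ref{P:conditions-delta-oriantable} asserts that for every principal leaf $L$ and every singular leaf $L'$ in a boundary stratum, the pullback of the sphere bundle $\phi_{L,L'}$ along itself is orientable. Applying Proposition \ref{P:when-delta-is-orientable} then gives that, for a generic choice of principal point $p$, the energy functional $E\colon W^{1,2}_{p,L}(M) \to \RR$ is a perfect Morse function.

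With $E$ perfect, Proposition \ref{P:equivalence} applies. The hypothesis ``$M/\fol$ has superpolynomial growth'' is one of conditions (1)--(2) of that proposition (equivalent under the \v{S}varc--Milnor lemma, since $W$ acts cocompactly on $N$). The implication $(1),(2) \Rightarrow (3)$ in Proposition \ref{P:equivalence} then yields that $M$ is rationally hyperbolic, as desired.

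There is no real obstacle here: the whole content has been packaged in the three propositions. The only point where one should be careful is checking that the genericity of the base point $p$ needed in Proposition \ref{P:when-delta-is-orientable} (and in the derivation of the Bott--Samelson cycles in Proposition \ref{P:on-bott-sam-cycles}) is harmless. This is indeed the case since the conclusion ``$M$ is rationally hyperbolic'' is a property of $M$ alone, independent of the point $p$ chosen to compute Morse theory on $W^{1,2}_{p,L}(M)$; thus perfectness of $E$ for one generic choice is enough to invoke Proposition \ref{P:equivalence}.
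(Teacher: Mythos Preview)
Your proposal is correct and follows essentially the same approach as the paper: invoke Proposition~\ref{P:conditions-delta-oriantable} to verify the hypothesis of Proposition~\ref{P:when-delta-is-orientable}, conclude perfectness of $E$, and then apply Proposition~\ref{P:equivalence} to obtain rational hyperbolicity. The paper's proof is just the terse two-line version of what you wrote.
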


\begin{proof}
By Proposition \ref{P:conditions-delta-oriantable}, in this case we can apply Proposition \ref{P:when-delta-is-orientable} to get perfectness of the energy functional $E:W^{1,2}_{p,L}(M)\to \RR$. By Proposition \ref{P:equivalence} the result follows.
\end{proof}

Before we finally prove Theorem \ref{T:abelian}, we prove the following criterion to determine flatness of certain manifolds without conjugate points:

\todo{Marco: added lemma as requested, with statement and proof slightly changed}
\begin{lemma}\label{L:flat}
Let $N^n$ be a complete simply connected manifold without conjugate points. Suppose that $N$ has sub-exponential volume growth, and its isometry group contains a discrete, finitely generated co-compact subgroup $\Gamma$ which is either abelian or generated by reflections. Then $N$ is flat.
\end{lemma}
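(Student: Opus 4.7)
The plan is to reduce, in both cases, to a free cocompact $\Z^n$-action on $N$ and then invoke the Burago--Ivanov resolution of Hopf's conjecture for tori. Since $N$ is complete, simply connected, and without conjugate points, the exponential map $\exp_p : T_pN \to N$ is a diffeomorphism for every $p \in N$, so $N$ is diffeomorphic to $\R^n$. Moreover, as $\Gamma$ is a discrete subgroup of the Lie group $\mathrm{Isom}(N)$, its action is properly discontinuous with finite point stabilizers.

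\emph{Abelian case.} Write $\Gamma \cong \Z^k \oplus T$ with $T$ the finite torsion subgroup. Then $\Z^k \leq \Gamma$ is a finite-index, hence still cocompact, torsion-free subgroup; since point stabilizers are finite and $\Z^k$ is torsion-free, this action is free. The quotient $N/\Z^k$ is then a closed aspherical $n$-manifold with $\pi_1 = \Z^k$. Comparing cohomological dimensions of $\pi_1$ (which is $k$) and of a closed aspherical $n$-manifold (which is $n$) forces $k = n$. By the Borel conjecture for the torus (Farrell--Hsiang in high dimensions, classical in low dimensions), $N/\Z^n$ is homeomorphic to $T^n$, and it inherits from $N$ a metric without conjugate points; by Burago--Ivanov this metric is flat, and hence so is $N$.

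\emph{Reflection case.} By the \v{S}varc--Milnor lemma, sub-exponential volume growth of $N$ transfers to sub-exponential word growth of $\Gamma$. Coxeter groups satisfy a growth dichotomy (see e.g.\ Davis's monograph on Coxeter groups): either they have polynomial growth, in which case they are virtually abelian---their irreducible components being of finite or affine type, and the translation lattices of the affine components assembling into a finite-index $\Z^m \leq \Gamma$---or they have exponential growth. We therefore extract a finite-index $\Z^m$-subgroup of $\Gamma$, still acting cocompactly, and reduce to the abelian case treated above.

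\emph{Main obstacle.} The technical heart lies in the reflection case, where the growth dichotomy for Coxeter groups is the essential input extracting a large abelian subgroup from the a priori combinatorial hypothesis. Once a free cocompact $\Z^n$-action is isolated, the inherited no-conjugate-points metric on the resulting torus is flat by Burago--Ivanov, closing the argument.
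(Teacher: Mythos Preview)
Your overall strategy coincides with the paper's: extract a finite-index torsion-free abelian subgroup acting freely and cocompactly, then appeal to Burago--Ivanov; and in the reflection case, reduce to the abelian one via \v{S}varc--Milnor and the Coxeter growth dichotomy (the paper cites \cite[Proposition 17.2.1]{Dav25} for the latter). Your cohomological-dimension argument pinning down the rank as $n$ is a nice touch that the paper leaves implicit.

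The one substantive difference is how you pass from ``closed aspherical $n$-manifold with $\pi_1=\Z^n$'' to the input of Burago--Ivanov. You invoke the Borel conjecture to obtain a \emph{homeomorphism} $N/\Z^n\to T^n$, but Burago--Ivanov is stated for Riemannian metrics on the standard \emph{smooth} torus; in dimension~$4$ the smooth Borel conjecture for $T^4$ is open, so as written your chain of implications does not quite close there. (If your intent is that Burago--Ivanov's argument really only uses $\pi_1=\Z^n$ and cocompactness---which would make the Borel detour unnecessary rather than insufficient---you should say so explicitly and justify it.) The paper handles this point by a different route: for $n\geq 5$ it cites a result of Wall \cite[end of Section 15.A]{Wal99} that any smooth homotopy $n$-torus has a finite cover \emph{diffeomorphic} to $T^n$; for $n<5$ it forms the Riemannian product $N\times\R^{5-n}$ with the cocompact $(\Gamma'\times\Z^{5-n})$-action, checks that this product still has no conjugate points, and reduces to the high-dimensional case. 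This product trick is elementary and sidesteps the low-dimensional subtleties entirely.
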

\begin{proof}
Assume first that $\Gamma$ is finitely generated and abelian. By the structure of finitely generated abelian groups, there is a free abelian subgroup $\Gamma'\subset \Gamma$ of finite index, which in particular still acts cocompactly on $N$. Such a group acts freely on $N$: in fact, if $g\in \Gamma'$ fixed a point $p\in N$, then $\Gamma'_p$ would contain the infinite group generated by $g$, contradicting the fact that the isotropy group would have to be finite since it acts effectively and properly on $T_pN$. Since $\Gamma'\simeq \ZZ^n$ for some $n$ and it acts freely and cocompactly on $N$, the quotient $N/\Gamma'$ is homotopic to an $n$-torus.

First assume $n\geq 5$. Then, by \cite[end of Section 15.A]{Wal99}, a finite covering $N/\Gamma''$ of $N/\Gamma'$ is diffeomorphic to $T^n$. However, since $N/\Gamma''$ is also a manifold without conjugate points, it is flat by \cite{BI94}. Thus $N$ is flat.

For the case $n<5$, consider the Riemannian product $N\times \RR^{5-n}$ of $N$ with the Euclidean space $\RR^{5-n}$, with the cocompact action of $\Gamma'\times \ZZ^{5-n}$ with $\Gamma'$ acting only on $N$ and $\ZZ^{5-n}$ acting by translations on $\RR^{5-n}$.  Since geodesics in the product $N\times T^{5-n}$ are exactly curves whose components are geodesics in the corresponding factors, and analogously for Jacobi fields,  it follows that $N\times \RR^{5-n}$ has no conjugate points. Furthermore, $\Gamma'\times \ZZ^{5-n}\simeq \ZZ^5$ and acts cocompactly on $N\times \RR^n$, with quotient $(N/\Gamma')\times T^{5-n}$, thus we can apply the previous case to conclude that $N\times \RR^{5-n}$ is flat. Since $N$ is totally geodesic in $N\times \RR^{5-n}$, we conclude that $N$ is flat. This ends the proof in the case of $\Gamma$ abelian.

Assume now that $\Gamma$ is generated by reflections. By the \v{S}varc-Milnor Lemma,  $\Gamma$ has sub-exponential growth, and by Theorem \ref{T:Reflections}(6), or \cite[Theorem 3.5]{AKLM07}, $\Gamma$ is a Coxeter group. By \cite[Proposition 17.2.1]{Dav25} there is a finite index subgroup $\Gamma'\subset \Gamma$ isomorphic to a Euclidean Coxeter group, and in particular there exists a finite index subgroup $\Gamma''\simeq \ZZ^n$. Since $\Gamma''$ still acts cocompactly on $N$, the previous case applied and $N$ is flat.

\end{proof}

\begin{prop}[Theorem \ref{T:abelian}]
Let $M$ be a compact, simply connected manifold with a variationally complete $G$-action satisfying property $(P)$. Then $M$ is rationally elliptic if and only if $M/G$ is flat.
\end{prop}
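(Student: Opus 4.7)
The plan is to assemble the machinery developed in Sections \ref{S:growth and hyperbolicity}, \ref{S:perfectness}, and earlier in Section \ref{S:the-proof}, together with Lemma \ref{L:flat}. Since most of the technical work has already been done, the proof proposal is essentially a bookkeeping exercise, and I expect no serious obstacle; the main thing to be careful about is verifying the hypotheses of each building block.

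First, I would apply Proposition \ref{P:coxeter} to write $M/G\cong N/W$, where $N$ is complete, simply connected, and without conjugate points, and $W$ is a discrete group of isometries of $N$ generated by reflections. Because $M/G$ is compact, Theorem \ref{T:Reflections}(6) ensures $W$ is finitely generated (by the reflections along the finitely many walls of a compact fundamental chamber). Next, property $(P)$ puts us in the setting of Proposition \ref{P:conditions-delta-oriantable}, so the hypothesis of Proposition \ref{P:when-delta-is-orientable} is met and the energy functional $E:W^{1,2}_{p,L}(M)\to\RR$ is perfect for generic $p$. Finally, since the action is homogeneous, every principal orbit is of the form $G/H$ for a compact Lie group $G$ and closed subgroup $H$, hence a compact homogeneous space, which is in particular rationally elliptic. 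Thus all the hypotheses of Proposition \ref{P:equivalence}, including the ``furthermore'' clause, are satisfied.

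For the direction $M/G$ flat $\Rightarrow$ $M$ rationally elliptic, flatness of $M/G$ means we can take $N=\RR^{n}$, which has polynomial volume growth. In particular condition $(1)$ of Proposition \ref{P:equivalence} fails, so by the ``furthermore'' clause (in contrapositive form, using rational ellipticity of principal leaves) condition $(3)$ must fail as well, i.e.\ $M$ is rationally elliptic.

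For the converse direction $M$ rationally elliptic $\Rightarrow$ $M/G$ flat, I argue by contrapositive. Suppose $M$ is rationally elliptic. Then condition $(3)$ of Proposition \ref{P:equivalence} fails, and by the implication $(1)\Rightarrow (3)$ (equivalently its contrapositive), $N$ does not have superpolynomial volume growth, i.e.\ it has sub-exponential growth. But $N$ is simply connected and without conjugate points, and $W$ is a discrete, finitely generated, cocompact group of isometries generated by reflections. These are exactly the hypotheses of Lemma \ref{L:flat}, which then yields that $N$ is flat, hence $M/G\cong N/W$ is flat. This completes both implications, and the only non-routine ingredient invoked is Lemma \ref{L:flat}, whose proof has just been carried out above.
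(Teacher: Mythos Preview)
Your proposal is correct and follows essentially the same route as the paper's proof: establish perfectness of $E$ via Propositions \ref{P:conditions-delta-oriantable} and \ref{P:when-delta-is-orientable}, then use Proposition \ref{P:equivalence} in both directions together with Lemma \ref{L:flat}. You are in fact slightly more explicit than the paper in two places: you spell out that principal orbits $G/H$ are rationally elliptic (needed for the ``furthermore'' clause when proving flat $\Rightarrow$ elliptic), and you verify finite generation of $W$ via Theorem \ref{T:Reflections}(6) before invoking Lemma \ref{L:flat}; the paper leaves both implicit.
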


\begin{proof}
Since $G$ satisfies property $(P)$, by Propositions \ref{P:when-delta-is-orientable} and \ref{P:conditions-delta-oriantable} the energy functional $E:W^{1,2}_{p,L}(M)\to \RR$ is perfect.

If $M/G$ is flat then $\widetilde{M/G}\simeq \RR^n$ has polynomial growth and, by Proposition \ref{P:equivalence}, $M$ is rationally elliptic.

Assume now that $M$ is rationally elliptic. Then by Proposition \ref{P:equivalence} $M/G$ is isometric to $N/\Gamma$ where $N$ is simply connected with no conjugate points and sub-exponential volume growth. Since $\Gamma$ is generated by reflections and acts cocompactly on $N$, Lemma \ref{L:flat} applies thus $N$, and hence $N/\Gamma=M/\fol$, is flat.
\end{proof}

%
%
%

Recall that if $(M,\fol)$ is a variationally complete foliation on a compact, simply connected manifold such that any principal leaf $L$ has $|\pi_1(L)|$ odd, then Propositions \ref{P:conditions-delta-oriantable} and \ref{P:when-delta-is-orientable} imply that the energy functional is perfect. Then if one furthermore assumes that $L$ is rationally elliptic, the arguments in the proof of Theorem \ref{T:abelian} go through and one gets the following result for foliations:

\begin{prop}
Let $(M,\fol)$ be a variationally complete foliation on a compact, simply connected manifold such that any principal leaf $L$ has $|\pi_1(L)|$ odd and universal cover $\tilde{L}$ rationally elliptic. Then $M$ is rationally elliptic if and only if $M/\fol$ is flat.
\end{prop}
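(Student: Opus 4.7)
The plan is to mimic the argument for Theorem \ref{T:abelian}, with the hypothesis $|\pi_1(L)|$ odd playing the role of property $(P)$ and the hypothesis on $\tilde L$ enabling the converse half of Proposition \ref{P:equivalence}. The first step is to observe that $|\pi_1(L)|$ odd forces $H^1(L;\ZZ_2)=0$: by the universal coefficient theorem and Hurewicz,
\[
H^1(L;\ZZ_2)\simeq \operatorname{Hom}(H_1(L;\ZZ),\ZZ_2)\simeq \operatorname{Hom}(\pi_1(L)^{\mathrm{ab}},\ZZ_2),
\]
and $\pi_1(L)^{\mathrm{ab}}$ is a quotient of a finite group of odd order, hence has no nontrivial homomorphism to $\ZZ_2$. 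Therefore Propositions \ref{P:conditions-delta-oriantable} and \ref{P:when-delta-is-orientable} apply, and the energy functional $E:W^{1,2}_{p,L}(M)\to \RR$ is perfect for a generic principal point $p$.

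Next, I would upgrade Proposition \ref{P:equivalence} to a genuine three-way equivalence. Since $|\pi_1(L)|$ is finite, $\tilde L\to L$ is a finite cover, and by hypothesis $\tilde L$ is rationally elliptic; hence $L$ admits a rationally elliptic finite cover. By the remark following Proposition \ref{P:equivalence}, the implication $(3)\Rightarrow (1),(2)$ holds in this setting. Combined with perfectness of $E$ and the main statement of Proposition \ref{P:equivalence}, we obtain that $M$ is rationally elliptic if and only if $N=\widetilde{M/\fol}$ has sub-exponential volume growth.

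For the forward direction, if $M/\fol$ is flat then $N\simeq \RR^n$ has polynomial (in particular sub-exponential) volume growth, so $M$ is rationally elliptic by the equivalence above. For the converse, suppose $M$ is rationally elliptic, so $N$ has sub-exponential volume growth. By Proposition \ref{P:coxeter} (applicable because $M$ is compact simply connected and $\fol$ is variationally complete), $M/\fol$ is isometric to $N/W$ with $N$ complete, simply connected, and without conjugate points, and $W$ a discrete group of isometries of $N$, generated by reflections, and acting cocompactly since $M/\fol$ is compact. Cocompactness together with Theorem \ref{T:Reflections}(6) shows that $W$ is finitely generated (by the reflections across the finitely many walls of a fundamental chamber). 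Hence Lemma \ref{L:flat} applies to $(N,W)$ and yields that $N$ is flat; consequently $M/\fol=N/W$ is flat.

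The only nontrivial point beyond routine bookkeeping is converting the hypothesis $|\pi_1(L)|$ odd into the Stiefel--Whitney vanishing needed to invoke Proposition \ref{P:conditions-delta-oriantable}; but this is immediate from universal coefficients, so I do not expect any real obstacle. Everything else reduces to stringing together results already established earlier in the paper.
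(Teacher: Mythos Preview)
Your proof is correct and follows the same route as the paper, which simply remarks that the arguments of Theorem~\ref{T:abelian} carry over once $|\pi_1(L)|$ odd yields $H^1(L;\ZZ_2)=0$ (so Propositions~\ref{P:conditions-delta-oriantable} and~\ref{P:when-delta-is-orientable} apply) and the rational ellipticity of $\tilde L$ supplies the converse implication in Proposition~\ref{P:equivalence} via the remark following it. Your explicit verification of $H^1(L;\ZZ_2)=0$ through universal coefficients and Hurewicz is precisely the step the paper leaves implicit.
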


\section{Counterexamples}\label{S:counterexs}

In this section we provide counterexamples to Conjectures \ref{Conj:mainconj1} and \ref{Conj:mainconj2}, in particular proving Theorem \ref{T:counterex}.

\subsection{5-dimensional counterexamples}

The following actions are described in great detail in \cite{Goz15}. Denote by $W=SU(3)/SO(3)$ the 5-dimensional \emph{Wu manifold} and by
\[
\B=\left\{(z_0,z_1,z_2,z_3)\in \CC^4\left|\quad \sum_i|z_i|^2=1,\, z_0^2+z_1^3+z_2^3+z_3^3=1\right.\right\}
\]
the 5-dimensional \emph{Brieskorn variety}. Both $W$ and $\B$ are rational spheres, with $H_2(W;\ZZ)=\ZZ_2, H_2(\B;\ZZ)=\ZZ_2^2$ and $H_q(W;\ZZ)=H_q(\B;\ZZ)=H_q(\sph^5;\ZZ)$ for $q\neq 2$.
In particular, $\B$ and $W\# W$ have the same homology, but different second Stiefel-Whitney class, thus they are not diffeomorphic.

$W$ comes equipped with a natural $SO(3)$-action, whose quotient is isometric to an equilateral triangle, and whose (three) fixed points project to the vertices of the triangle. Similarly, it was proved in \cite{Hud79} that $\B$ comes equipped with an $SO(3)$-action whose quotient is a hyperbolic rectangle with angles $\pi/3$, and whose 4-corners correspond to fixed points. Furthermore, it was shown that the connected sums $M_{\ell,k}=W^{\# \ell}\# \B^{\#k}$, performed on the fixed points of the $SO(3)$-action, still carries $SO(3)$-action. It was shown by \cite[Theorem C]{Goz15} that these actions are polar and, up to equivariant diffeomorphism, correspond to actions on $W^{\#\ell}$ or $\B^{\#k}$. All these actions (whenever $\ell>1$) are counterexamples to Conjectures \ref{Conj:mainconj1} and \ref{Conj:mainconj2}. In fact, via the classification of polar actions in \cite{Goz15}, these are precisely all the counterexamples among polar actions on compact, simply connected 5-manifolds.

These 5-manifolds offer a negative answer to Problem \ref{P:WRS} as well:

\begin{proof}[Proof of Theorem \ref{T:neg-ans}]
Consider the polar action of $SU(3)$ on itself by conjugation. It is well-known that the quotient of this action is a flat equilateral triangle, and three fixed points at the vertices. Again, one can take the equivariant connected sum $N_\ell:=SU(3)^{\#\ell}$, still equipped with an $SU(3)$-action, which satisfies property $(P)$ (since the principal isotropy group is a maximal torus hence connected) and whose quotient, for $\ell>1$, is a polygon with $\ell+2$ edges and angles $\pi/3$. By \cite{Men16} there is an $SU(3)$-invariant metric on $N_\ell$ whose quotient is a hyperbolic polygon for $\ell>1$ and thus, by Theorem \ref{T:abelian}, $N_\ell$ is rationally hyperbolic.

At the same time however, again by  \cite{Men16} it is possible to find an $SO(3)$-invariant metric on $M_\ell:=W^{\#\ell}$ whose quotient $M_\ell/SO(3)$ is isometric to $N_\ell/SU(3)$, even though $N_\ell$ is rationally hyperbolic and $M_\ell$ is rationally elliptic.
\end{proof}

\subsection{8-dimensional counterexample}
All previous counterexamples to conjectures \ref{Conj:mainconj1} and \ref{Conj:mainconj2} have hyperbolic quotients with at least four sides. We now show that there are also counterexamples with only three sides.

By \cite[Figure 3]{GZ12}, there exists an 8-dimensional closed simply connected manifold $\M$ carrying a cohomogeneity-2 $\sph^3\times \sph^3$-action with quotient and groups as in Figure \ref{F:quotient}.
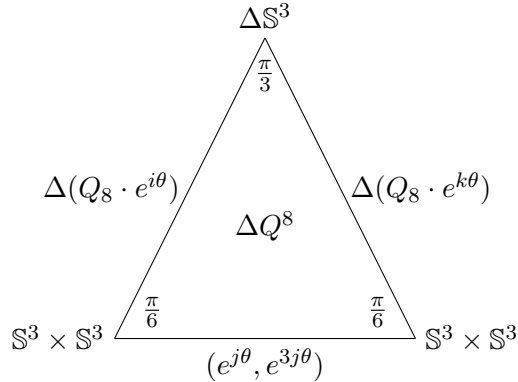
\begin{figure}[!htb]
\begin{tikzpicture}
\draw (0,0) .. controls (1,2) .. (2,4);
\draw (2,4) .. controls (3,2) .. (4,0);
\draw (0,0) -- (4,0);
\node at (0.5,0.35) {$\pi\over 6$};
\node at (3.5,0.35) {$\pi\over 6$};
\node at (2,3.5) {$\pi\over 3$};
\node at (2,1.5) {$\Delta Q^8$};
\node at (0,0)[anchor=east] {$\sph^3\times \sph^3$};
\node at (4,0)[anchor=west] {$\sph^3\times \sph^3$};
\node at (2,4)[anchor=south] {$\Delta\sph^3$};
\node at (1,2)[anchor=east] {$\Delta(Q_8\cdot e^{i\theta})$};
\node at (3,2)[anchor=west] {$\Delta(Q_8\cdot e^{k\theta})$};
\node at (2,0)[anchor=north] {$(e^{j\theta},e^{3j\theta})$};
\end{tikzpicture}
\caption{Quotient and isotropy groups for $\mathcal{M}.$}\label{F:quotient}
\end{figure}

Notice that the two vertices at the bottom, call them  $p_1,p_2$, represent fixed points for the action, while the top vertex corresponds to a singular orbit $L_3$ diffeomorphic to $\sph^3$. Let $L_{13}$ denote a generic orbit in the singular stratum $\Sigma_{13}$ between $p_1$ and $L_3$. This is diffeomorphic to an orbit in the (8-dimensional) slice representation of $p_1$, which is equivalent to the isotropy representation of $G_2/SO(4)$. Restricting this representation to the unit sphere $\sph^7$, it was shown in \cite{Miy93}, that $L_{13}$ is in fact diffeomorphic to the preimage, via the Hopf map $h:\sph^7\to \sph^4$, of a singular orbit $L'$ of the $\sph^3$ action on $\RR^5$. Since $L'$ is known to be diffeomorphic to $\mathbb{RP}^2$, $L_{13}$ is the total space of a principal bundle:
\[
\sph^3\to L_{13}\to \mathbb{RP}^2.
\]
Using the Gysin sequence we can compute the integral cohomology of $L_{13}$:
%
\begin{table}[h]
    \centering
    \begin{tabular}{c||c|c|c|c|c|c}
       $q$          & 0     & 1     & 2      & 3     & 4     & 5  \\
       \hline
       $H^q(L_{13};\Z)$  & $\Z$  & 0     & $\Z_2$ & $\Z$  & 0     & $\Z_2$  
    \end{tabular}
    \bigskip
    \caption{Cohomology of $L_{13}$.}
\end{table}

Letting $\pi:L_{13}\to L_3$ denote the closest-point map projection, let $m\in \ZZ$ be such that $\pi^*: \ZZ\simeq H^3(L_{3})\to H^{3}(L_{13})$ is given by multiplication by $m$ (depending on the choice of generators, $m$ is well-defined up to sign). Using the Meyer-Vietoris sequence for the open cover $\{\Sigma_{13}\setminus p_1, \Sigma_{13}\setminus L_3\}\subseteq \Sigma_{13}$ and noticing that we have homotopy equivalences:
\[
(\Sigma_{13}\setminus p_1)\sim L_3,\qquad (\Sigma_{13}\setminus L_3)\sim p_1,\qquad (\Sigma_{13}\setminus p_1)\cap (\Sigma_{13}\setminus L_3)\sim L_{13}
\]
we can then compute the cohomology of the entire stratum $\Sigma_{13}$, which depends on the (at the moment unknown) value of $m$:

\begin{table}[h]\label{tabcohom}
    \centering
    \begin{tabular}{c|c||c|c|c|c|c|c|c}
       & $q$          & 0     & 1     & 2      & 3     & 4     & 5        & 6 \\ \hline
      $m=0$ &$H^q(\Sigma_{13};\Z)$ & $\Z$   & 0     & 0      & $\Z\oplus\Z_2$& $\Z$     & 0   & $\Z_2$ \\
      \hline 
      $m\neq 0$ &  $H^q(\Sigma_{13};\Z)$ & $\Z$   & 0     & 0      & $\Z_2$& $\Z_m$     & 0   & $\Z_2$ 
    \end{tabular}
    \bigskip
    \caption{Cohomology of $\Sigma_{13}$, depending on $m$. }
\end{table}

Finally, we can compute the cohomology of $\M$ using Mayer-Vietoris again, via the open cover $\{B_{\epsilon}(\Sigma_{13}), \M\setminus\Sigma_{13}\}$ and using the following homotopy equivalences:
\[
B_{\epsilon}(\Sigma_{13})\sim \Sigma_{13},\qquad \M\setminus\Sigma_{13}\sim p_2,\qquad B_{\epsilon}(\Sigma_{13})\cap (\M\setminus\Sigma_{13})\sim (B_{\epsilon}(p_2)\setminus p_2)\sim \sph^7.
\]

The resulting cohomology is as follows:

\begin{table}[h]
    \centering
    \begin{tabular}{c|c||c|c|c|c|c|c|c|c|c}
       & $q$          & 0     & 1     & 2      & 3     & 4     & 5        & 6 &   7   &   8     \\
       \hline
           $m=0$ & $H^q(\M;\Z)$ & $\Z$   & 0     & 0      & $\Z\oplus\Z_2$& $\Z$     & 0   & $\Z_2$ &0 & $\Z$  \\
      \hline
      $m\neq 0$& $H^q(\M;\Z)$ & $\Z$   & 0     & 0      & $\Z_2$& $\Z_m$     & 0   & $\Z_2$ &0 & $\Z$  
    \end{tabular}
    \bigskip
    
    \caption{Cohomology of $\M$, depending on $m$.}
\end{table}

%
%

Since $\M$ is simply connected it must satisfy Poincar\`e duality, which is easily checked to hold only for $m=\pm1$. Therefore, the actual integral cohomology of $\M$ is:

\begin{table}[h]
    \centering
    \begin{tabular}{c||c|c|c|c|c|c|c|c|c}
     $q$          & 0     & 1     & 2      & 3     & 4     & 5        & 6 &   7   &   8    \\
      \hline
 $H^q(\M;\Z)$ & $\Z$   & 0     & 0      & $\Z_2$& $0$     & 0   & $\Z_2$ &0 & $\Z$     \end{tabular}
    \bigskip
    
    \caption{Actual integral cohomology of $\M$.}
\end{table}

It follows that $\M$ is a rational sphere. Thus, performing an equivariant connected sum of $k$ copies of $\M$ along the fixed points we obtain new 8-manifolds $\M^{\# k}$ which are again rational 8-spheres with polar $SO(4)$ actions, whose quotients are hyperbolic polygons. All such $\M^{\# k}$ have different torsions in integral cohomology, hence they are all topologically distinct.

We conclude the section by summarizing the results obtained, in the proof of Theorem \ref{T:counterex}:

\begin{proof}[Proof of Theorem \ref{T:counterex}]
The manifolds $W^{\#\ell}$ ($\ell\geq 2$) and $\B^{\#\ell}$ ($\ell\geq 1$) provide families of rationally elliptic, 5-dimensional $SO(3)$-manifolds with quotient a hyperbolic polygon with $\ell+2$, resp. $2\ell+2$ sides and all angles equal to $\pi/3$. By \cite{Men16} there exist $SO(3)$-invariant metrics on these manifolds, such that the action is polar and the quotient is a hyperbolic polygon.

Similarly, the 8-manifolds $\M^{\# k }$, $k\geq 1$,
defined above provide a family of rationally elliptic, $(\sph^3\times \sph^3)$-manifolds with quotient a hyperbolic polygon with $k+2$ sides and angles equal to $\pi/3$, except for two angles equal to $\pi/6$. By \cite{Men16} there exist $SO(4)$-invariant metrics on these manifolds, such that the action is polar and the quotient is a hyperbolic polygon.
\end{proof}

\section{Rational hyperbolicity and topological entropy}\label{S:entropy}

We first recall the definition of topological entropy.

Given a compact metric space $(X,d)$ with a flow $\phi:X\times \R\to X$, one defines, for any $T>0$, a new distance $d_T$ on $X$ by
$$d_T(v,w):=\max_{0\leq t\leq T}d(\phi(v,t),\phi(w,t))\ .$$
One then defines the \textit{topological entropy} of $\phi$, by
$$h_{\text{top}}(\phi):=\lim_{\varepsilon\to 0}\limsup_{T\to+\infty}\frac{\log(N_T^\varepsilon)}{T}\ ,$$
where $N^\varepsilon_T$ is the minimum number of balls of radius $\varepsilon$ in the metric $d_T$ needed to cover $X$. This definition is actually independent on the choice of metric $d$ on $X$, and only depends on the flow. Finally, for a compact Riemannian manifold $(M, {g})$,
one defines the \emph{topological entropy} of $(M,{g})$, $h_{top}(M,{g})$, as the topological entropy of the geodesic flow $\phi:SM\times \R\to SM$ on the unit tangent bundle $SM$.


In the proof of Theorem \ref{T:pos-top-ent}, we show that the counterexamples constructed in Section \ref{S:counterexs} have positive topological entropy with respect to any $G$-invariant metric. 

\begin{proof}[Proof of Theorem \ref{T:pos-top-ent}] Let $M$ be any of the manifolds described in Section \ref{S:counterexs}, let $G$ denote the corresponding actions, and endow $M$ with a $G$-invariant metric $g$. Then the topological entropy of $M/G=N/\Gamma$ is still well defined, and by \cite[Proposition 3.15]{Pat12} we know the following:
\begin{enumerate}
\item If $H\subset TM$ denotes the space of vectors horizontal to the $G$-orbits, and $\phi_H:H\to H$ denotes the restriction of the geodesic flow to $H$, then
\[
h_{top}(M,g)\geq h_{top}(\phi_H)
\]
\item Since the projection map $\pi_*:H\to S(N/\Gamma)$ (where $S(N/\Gamma)=(SN)/\Gamma$ is the orbifold unit tangent bundle) is surjective and commutes with the corresponding geodesic flows, then
\[
h_{top}(\phi_H)\geq h_{top}(N/\Gamma,h)
\]
\item Let $\Gamma'\subset \Gamma$ denote a finite index subgroup acting freely on $N$, so that $N/\Gamma'$ is a smooth surface finitely covering $N/\Gamma$. Since the projection $N/\Gamma'\to N/\Gamma$ induces a covering $S(N/\Gamma')\to S(N/\Gamma)$, then
\[
h_{top}(N/\Gamma,h)=h_{top}(N/\Gamma',h')
\]
where $h$ is the metric on $N/\Gamma$, and $h'$ the pullback metric on $N/\Gamma'$.
\end{enumerate}

Finally, by \cite[Corollary on page 570]{Man79} we have
\[
h_{top}(N/\Gamma')\geq{1\over \operatorname{diam}(N/\Gamma') }\lim_{r\to \infty} {\ln \# B_r(e)\over r},
\]
where $\# B_r(e)$ denotes the number of elements of $\Gamma'$ which can be expressed as a word of at most $r$ elements of some finite set of generators (the limit does not depend on the choice of generators). Since this limit does not depend on the choice of the specific metric in $N/\Gamma'$, we can compute it with respect to the hyperbolic metric  and obtain $h_{top}(N/\Gamma')>0$, thus obtaining that for any $G$-invariant metric $g$ on $M$ one has
\[
h_{top}(M,g)>0.
\]
\end{proof}
%
%
%

\todo{Ricardo: add URL to Marco's notes?}

\bibliographystyle{alpha}
\bibliography{References}


\end{document}